\documentclass[11pt]{article}
\usepackage{newcommands2006, amsfonts, amsmath, epic, amssymb, amsthm, 
array, rotating, bm}

\textwidth=5.5in
\textheight=7.5in

\newtheorem{theorem}{Theorem}[section] 
\newtheorem{prop}[theorem]{Proposition}
\newtheorem{lemma}[theorem]{Lemma}

\theoremstyle{definition} 
\newtheorem{example}[theorem]{Example}

\theoremstyle{remark} 
\newtheorem{remark}[theorem]{Remark}

\newcommand{\bfRic}{\ensuremath{\mathbf{Ric}} }

\begin{document}

\title{The Ricci Flow for Nilmanifolds}
\author{Tracy L. Payne }



\maketitle  

\begin{abstract}
We consider the Ricci flow for simply connected nilmanifolds, which 
translates to a Ricci flow on the space of nilpotent metric Lie algebras.
We consider the evolution of the inner product with respect to
time and the evolution of  structure constants with respect to time,
as well as the evolution of these quantities modulo rescaling.  
We set up systems of O.D.E.'s for some of these flows and describe
their qualitative properties.
We also present some explicit solutions for the evolution of soliton
metrics under the Ricci flow.  
\end{abstract}


\section{Introduction}\label{intro} 

The Ricci flow, defined by R. Hamilton (\cite{hamilton82}),  is an
important tool for  understanding the topology and geometry of
three-manifolds.  It is key in Perelman's  work
(\cite{perelman1}, \cite{perelman2},  \cite{perelman3}), and has
useful applications in other areas of  geometry as well.  Due to the
difficulty of solving the partial differential equations involved,
very few explicit examples of Ricci flow solutions are known.   

For a homogeneous manifold, the Ricci flow can be presented as a set
of O.D.E.'s rather than P.D.E.'s.  In many cases, it is  possible to
solve these systems exactly or to make estimates that allow a
description of the qualitative behavior of the system.   The Ricci
flows for the universal covers of compact homogeneous spaces of
dimension three have been  analyzed in \cite{isenberg-jackson-92} and
\cite{knopf-mcleod-01}.  Solutions  for certain metrics on the
universal covers of compact homogeneous four-manifolds are described
in \cite{isenberg-jackson-lu}.  

These examples provide insight into the behavior of the Ricci flow in
general, exhibiting many of the phenomena  of interest in  variable
curvature cases.  If a manifold $M$  admits a homogeneous metric, the
set of  homogeneous metrics on $M$ is invariant under    the Ricci
flow on the space of all metrics on $M$.  One hopes that this
invariant set is an attractor for the Ricci flow.   If that is true, a
description of the Ricci flow for homogeneous  metrics on $M$ would
yield an understanding of the long-term behavior of metrics in  a
large subset of the set of all metrics on $M$.

In this work, we continue the study of the Ricci flow for  homogeneous
spaces initiated in \cite{isenberg-jackson-92},
\cite{knopf-mcleod-01}, and \cite{isenberg-jackson-lu}.  We  analyze
the Ricci flow for the class of homogeneous spaces consisting of
simply connected nilmanifolds of arbitrary dimension.   A {\em
nilmanifold} is a Riemannian manifold with universal cover  $(N,g),$
where $N$ is a simply-connected nilpotent Lie group $N$ endowed with a
left-invariant metric $g$.  Although no Einstein metrics exist on a
nilpotent Lie group $N$ unless $N$ is abelian (Corollary 2,
\cite{jensen-69}), many  nilpotent groups admit  soliton metrics.  A
{\em soliton metric} is a  metric  $g$ that evolves under the Ricci
flow by diffeomorphisms and rescaling; that is,  $g_t = t \cdot
\eta_t^\ast g_0,$ where $\eta_t$ is a one-parameter family of
diffeomorphisms.   If a nilpotent Lie group does admit  a soliton
metric, then it is unique up to scaling (\cite{lauret01a}).  However,
not all nilpotent Lie groups admit soliton metrics.  Using   geometric
invariant theory, it can be shown that under the Ricci flow, any
left-invariant metric $g$ on a nilpotent Lie group $N$ approaches, modulo
rescaling,  a unique soliton metric $g^\prime$ on a
 nilpotent Lie group $N^\prime,$ and if $N$ admits a soliton metric, 
 that limiting nilmanifold is the soliton metric on $N.$
 (\cite{jablonski-08}).

The Ricci flow for three- and four-dimensional nilmanifolds is  fairly
well understood.   In dimension three, there is a single simply
connected  nonabelian nilpotent Lie group, the Heisenberg group $H_3.$
It is shown in   \cite{isenberg-jackson-92}  that under the Ricci
flow, any initial left-invariant metric $g_0$ on  $H_3$ collapses to a
flat metric on $\boldR^2$.  In dimension four, there is a single
simply connected nilpotent Lie group that is not a product of
lower-dimensional nilpotent Lie groups, the filiform group $L_4.$   It
was shown in  \cite{isenberg-jackson-lu}  that under the Ricci flow,
any initial metric $g_0$ that is diagonal with respect to a special
basis  collapses.   As time goes to infinity, any initial metric on
$H_3$ and the special metrics on $L_4$  can be viewed in the
appropriate framework as asymptotically projectively  approaching  the
unique soliton metric on $H_3$ or  $L_4$, respectively  (See
\cite{lott-05}, \cite{guenther-isenberg-knopf}).

Now we summarize the main results in this paper.   Following  this
section,  in Section \ref{preliminaries} we establish the necessary
background and  preliminaries involving nilmanifold geometry  and the
Ricci flow. The Ricci flow on the space of  left-invariant metrics on
a simply connected Lie group $G$ can be converted  to a  flow on the
space of inner products on  the corresponding  Lie algebra $\frakg.$
The flows for all individual metric Lie algebras can be  combined to
define a Ricci flow on the space of metric Lie algebras, and this flow
projects to a projectivized Ricci flow on the space of all volume-normalized
metric Lie algebras.    We define what it means for
for a metric Lie algebra to projectively approach another metric Lie
algebra, and what it  means  for a metric Lie algebra to collapse
under the Ricci flow.   We define a  Lie bracket flow for a Lie
algebra  that describes how the Lie bracket relative to a moving
orthonormal basis changes under the Ricci flow, and we define a
projectivization of this flow.

 In Theorem \ref{ricci-system} of  Section \ref{systems}, we  set up
systems of O.D.E.'s for the Ricci flow and the  Lie bracket flow   for
a single nilpotent Lie algebra $(\frakn,\sfQ).$  Theorem
\ref{projectivized} gives  O.D.E.'s for the projectivized Lie bracket
flow.  Proposition \ref{conserved} describes invariant quantities for
the Ricci flow for nilpotent metric Lie algebras.

In Section \ref{soltraj}, we find some explicit solutions for soliton
trajectories for the Ricci flow for nilpotent metric Lie algebras;
these are presented in Theorem \ref{soliton traj}.  The theorem only
gives solutions for nilpotent metric Lie algebras admitting a special
kind of basis.  We describe some broad conditions under which these
bases exist. 

Finally, in Section \ref{examples}, we give some examples.  We
consider the Lie bracket flow for Heisenberg Lie algebras and a Lie
algebra that does not  admit a soliton metric.  

The author is grateful to Jim Isenberg for his suggestion of studying
the Ricci flow for nilmanifolds. She thanks him, Jorge Lauret, Peng
Lu  and Dave Glickenstein for helpful discussions.
 This work was supported by NSF ADVANCE grant \#SBE-0620073.

\section{Preliminaries}\label{preliminaries}
\subsection{Structure of metric nilpotent Lie
algebras}\label{structure}

 Suppose that $(\frakg,\sfQ)$ is an $n$-dimensional metric Lie algebra
with basis $\calB = \{\bfx_i\}_{i=1}^n.$ The set
  \[ \Lambda_\calB = \{ (j,k,l) \, | \, \alpha_{jk}^l \ne 0 ,  1 \le j
< k  \le n, 1 \le l \le n  \}\] indexes the set of nonzero structure
constants  $\alpha_{jk}^l$ for $\frakg$ relative to $\calB$ without
repetitions due to skew-symmetry.  

Suppose that $\frakg$ is nonabelian, so $\Lambda_\calB$ is nonempty.
Let  $\{ \bfe_i \}_{i = 1}^n$ be the standard orthonormal basis for
$\boldR^n$.  For $1 \le j,k,l \le n$,  define the $1 \times n$ row
vector $\bm{y}_{jk}^l$ to be $\bfe_j^T + \bfe_k^T - \bfe_l^T.$  We
call a  vector $\bm{y}_{jk}^l,$ where $(j,k,l) \in \Lambda_\calB,$ a
{\em root vector} for $(\frakg,\sfQ)$ relative to the basis  $\calB$.
Let $\bm{y}_1,\bm{y}_2, \ldots, \bm{y}_m$ (where $m = | \Lambda_\calB
|$) be  an enumeration of the root vectors $\bfy_{jk}^l$ for
$(\frakg,\sfQ)$ relative to  $\calB$,  in the dictionary order on the
integer triples $(j,k,l)$.   Define the {\em root matrix} $Y$ for
$(\frakg,\sfQ)$ relative to  $\calB$ to be the $m \times n$ matrix
whose rows are the root vectors $\bm{y}_1, \bm{y}_2, \cdots,
\bm{y}_m.$  When $\frakg$ is nonabelian,  the {\em Gram matrix} $U$
for $(\frakg,\sfQ)$ relative to $\calB$  is defined to be the $m
\times m$ matrix $U = Y Y^T.$ 

Suppose that the basis $\calB$ is orthonormal, so that the inner
product $\sfQ$ can be written as  $\sfQ = \sum_{i=1}^n  \, dx^i
\otimes dx^i,$ where  $dx^i$ is dual to $\bfx_i,$ for $i=1, \ldots,
n.$   The structure constants for $\frakg$ relative to  $\calB$ are
given by
\begin{equation}\label{alpha-def} \alpha_{jk}^l =
\sfQ([\bfx_j,\bfx_k],\bfx_l) \end{equation} for  $1 \le j,k,l \le n.$
Letting  $\sfQ = \sum_{i=1}^n  q_i \, dx^i \otimes dx^i,$ where $q_1,
\ldots, q_n > 0,$ yields a family of inner products on $\frakg.$  For
each such $\sfQ,$ there is an orthonormal basis  $\overline{\calB_{\sfQ}} =
\overline{\calB} $
obtained by rescaling  each basis vector  in $\calB$ by its length;
elements of $\overline{\calB}$ are the vectors $\overline{\bfx_i} =
\frac{1}{\| \bfx_i\|} \, \bfx_i,$ for $i = 1, \ldots, n.$ The
structure constants for $\frakg$ relative to the orthonormal basis
$\overline{\calB}$ are  
\begin{equation}\label{a-tilde} \sfQ( [\overline \bfx_j,\overline
\bfx_k], \overline \bfx_l) = \frac{ \| \bfx_l\| }{ \| \bfx_j \| \cdot
\| \bfx_k \| } \, \alpha_{jk}^l = \sqrt{\frac{q_l}{q_j q_k}} \,
\alpha_{jk}^l .\end{equation} Note that the set
$\Lambda_{\overline{\calB}}$ defined by $\overline{\calB}$  is the
same as the set $\Lambda_{\calB}$ defined by $\calB,$ and hence the
set of  root vectors is the same for $\calB$ and $\overline{\calB}.$
Consequently $\calB$ and $\overline{\calB}$ have the same root matrix
$Y$ and the same Gram matrix $U.$

We define the {\em structure vector} for  $(\frakg,\sfQ)$ relative to
the orthogonal basis $\calB$ to be   the $m \times 1$ vector $\bfa$
having as entries the squares of the nonzero  structure constants
relative to  the orthonormal basis $\overline \calB.$ More precisely,
the   entries of $\bfa$ are the  numbers $\frac{q_l}{q_j q_k}
(\alpha_{jk}^l)^2 ,$ for $(j,k,l)$ in $\Lambda_\calB,$  listed in the
dictionary order on $(j,k,l)$.

\subsection{Curvatures of nilpotent metric Lie algebras} 

  We say that a  basis $\calB = \{ \bm{x}_i \}_{i=1}^n$ of  a metric
Lie algebra $(\frakg,\mathsf{Q})$ is a \textit{Ricci-diagonal} if  the
Ricci form for $(\frakg,\mathsf{Q})$ is diagonal when represented with
respect to $\calB.$  The $1 \times n$ vector $\bfRic_\calB$ defined by
\begin{equation}\label{def-of-Ric} \bfRic_\calB =
\left(\frac{\ric(\bfx_1,\bfx_1)}{\| \bfx_1 \|^2},
\frac{\ric(\bfx_2,\bfx_2)}{\| \bfx_2\|^2}, \ldots,
\frac{\ric(\bfx_n,\bfx_n)}{\| \bfx_n \|^2}\right)
 \end{equation} will be called the {\em Ricci vector} for
$(\frakn,\sfQ)$ relative to the basis $\calB.$    Together, an
orthogonal Ricci-diagonal basis $\calB$ for $(\frakn,\sfQ),$  its
Ricci vector, and the  set of lengths of the basis vectors determine
the inner product and Ricci form for  $(\frakn,\sfQ).$

The next theorem  gives  easy-to-compute   formulas for the Ricci form
and Ricci vector   for a metric nilpotent Lie algebra $(\frakn,\sfQ).$
Recall that the Lie bracket of a metric Lie algebra $(\frakg,\sfQ)$ is
encoded in the linear map $J : \frakg \to \End(\frakg)$ defined  by
$J_{\bfx}(\bfy) = \ad^\ast_{\bfy} \bfx ,$ for $\bfx$ and $\bfy$ in
$\frakg.$  The inner product $\sfQ(\cdot,\cdot)  = \la \cdot , \cdot
\ra$ on $\frakg$ induces an inner product on the tensor algebra of
$\frakg,$ which we also denote by $\la \cdot , \cdot \ra.$ 
 
\begin{theorem}[Theorems 6 and 8, \cite{payne05}]\label{riccitensor}
Let  $(\frakn,\sfQ)$  be a nonabelian metric nilpotent Lie algebra.
Then the Ricci form for $(\frakn,\sfQ)$ is given by
\begin{equation}\label{ricxy} \ric (\bfx, \bfy)  = - \smallfrac{1}{2}
\la \ad_{\bfx},\ad_{\bfy} \ra +  \smallfrac{1}{4} \la J_{\bfx},
J_{\bfy} \ra, 
\end{equation} where $\bfx$ and $\bfy$ are in $\frakn.$  Let $\calB$
be an orthogonal Ricci-diagonal basis with associated root matrix $Y$
and structure vector $\bfa.$   The  Ricci vector for $(\frakn,\sfQ)$
relative to $\calB$  may be written as 
\begin{align}\label{Ricci vector} \bfRic_{\calB} &= -\smallfrac{1}{2}
\sum_{(j,k,l) \in \Lambda_\calB} \frac{q_l}{q_jq_k}(\alpha_{jk}^l)^2
(\bm{y}_{jk}^l) \\ & = -\smallfrac{1}{2} \bfa^T Y. \notag
\end{align}
\end{theorem}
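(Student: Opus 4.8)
The plan is to establish the two assertions in order: first the intrinsic identity (\ref{ricxy}) for the Ricci form, and then the explicit description (\ref{Ricci vector}) of the Ricci vector, which I will obtain by evaluating (\ref{ricxy}) on the normalized orthonormal basis $\overline{\calB}$.

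For (\ref{ricxy}), I would start from the general Besse--Milnor formula for the Ricci curvature of a left-invariant metric, which (summing over the orthonormal basis $\overline{\calB}=\{\overline{\bfx_i}\}$) reads $\ric(\bfx,\bfx) = -\frac{1}{2}\sum_i \|[\bfx,\overline{\bfx_i}]\|^2 - \frac{1}{2}\operatorname{tr}(\ad_\bfx^2) + \frac{1}{4}\sum_{i,j}\la[\overline{\bfx_i},\overline{\bfx_j}],\bfx\ra^2 - \la[\bfx,H],\bfx\ra$, where $H$ is the mean-curvature vector determined by $\la H,\bfx\ra = \operatorname{tr}(\ad_\bfx)$ for all $\bfx$. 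The decisive point is that in a nilpotent Lie algebra every operator $\ad_\bfx$ is nilpotent, so $\operatorname{tr}(\ad_\bfx^k)=0$ for all $k\ge 1$; in particular $H=0$ (eliminating the last term) and $\operatorname{tr}(\ad_\bfx^2)=0$ (eliminating the second term). The two surviving sums are exactly Hilbert--Schmidt norms: $\sum_i\|[\bfx,\overline{\bfx_i}]\|^2 = \operatorname{tr}(\ad_\bfx^\ast\ad_\bfx) = \la\ad_\bfx,\ad_\bfx\ra$, and, using the identity $\la J_\bfx\overline{\bfx_i},\overline{\bfx_j}\ra = \la\bfx,[\overline{\bfx_i},\overline{\bfx_j}]\ra$ that follows straight from $J_\bfx\bfy=\ad^\ast_\bfy\bfx$, also $\sum_{i,j}\la[\overline{\bfx_i},\overline{\bfx_j}],\bfx\ra^2 = \operatorname{tr}(J_\bfx^\ast J_\bfx) = \la J_\bfx,J_\bfx\ra$. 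Since both $\la\ad_\bfx,\ad_\bfy\ra$ and $\la J_\bfx,J_\bfy\ra$ are symmetric bilinear in $(\bfx,\bfy)$, polarizing the resulting quadratic identity yields (\ref{ricxy}).

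For (\ref{Ricci vector}), I would evaluate (\ref{ricxy}) at $\bfx=\bfy=\overline{\bfx_i}$, so that by (\ref{def-of-Ric}) the $i$-th entry of $\bfRic_{\calB}$ is $\ric(\overline{\bfx_i},\overline{\bfx_i})$. Writing $c_{jk}^l := \sfQ([\overline{\bfx_j},\overline{\bfx_k}],\overline{\bfx_l})$ for the structure constants relative to $\overline{\calB}$, expanding the two Hilbert--Schmidt norms in this basis gives $\la\ad_{\overline{\bfx_i}},\ad_{\overline{\bfx_i}}\ra = \sum_{k,l}(c_{ik}^l)^2$ and $\la J_{\overline{\bfx_i}},J_{\overline{\bfx_i}}\ra = \sum_{k,l}(c_{kl}^i)^2$. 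I would then regroup these ordered sums over the index set $\Lambda_{\calB}$, in which each bracket pair is listed once with $j<k$. Using skew-symmetry, a triple $(j,k,l)\in\Lambda_{\calB}$ contributes $(c_{jk}^l)^2$ to the first sum exactly when $i\in\{j,k\}$ (once for each coincidence), and contributes $2(c_{jk}^l)^2$ to the second sum exactly when $i=l$. Collecting the coefficient $-\frac{1}{2}$ on each input index and $\frac{1}{4}\cdot 2 = \frac{1}{2}$ on the single output index reproduces, entrywise, the vector $-\frac{1}{2}(\bfe_j^T+\bfe_k^T-\bfe_l^T) = -\frac{1}{2}\,\bm{y}_{jk}^l$. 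Since $(c_{jk}^l)^2 = \frac{q_l}{q_jq_k}(\alpha_{jk}^l)^2$ by (\ref{a-tilde}) is precisely the $(j,k,l)$-entry of the structure vector $\bfa$, summing over $\Lambda_{\calB}$ gives the first equality in (\ref{Ricci vector}); the second equality is just the matrix form of that sum, because the rows of $Y$ are the root vectors $\bm{y}_{jk}^l$.

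The step I expect to be most delicate is the bookkeeping in the last paragraph: one must track that the ordered sum defining $\la J_{\overline{\bfx_i}},J_{\overline{\bfx_i}}\ra$ double-counts each unordered pair, so that the factor $\frac{1}{4}$ becomes $\frac{1}{2}$ after restriction to $\Lambda_{\calB}$, and confirm that this $\frac{1}{2}$ attaches to the single $-\bfe_l^T$ entry of the root vector while the two input indices $j,k$ supply the $+\bfe_j^T$ and $+\bfe_k^T$ entries. In the first part, the corresponding care lies in verifying that the middle and mean-curvature terms of the general formula both vanish under nilpotency; everything else is a direct rewriting of sums as operator traces.
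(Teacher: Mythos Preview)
The paper does not prove this theorem; it is quoted from \cite{payne05} (Theorems 6 and 8 there) and stated without argument, so there is no in-paper proof to compare against. Your proposal is a correct and standard derivation: you invoke the general left-invariant Ricci formula, use nilpotency to kill the Killing-form and mean-curvature terms, and then do the index bookkeeping over $\Lambda_{\calB}$ to obtain (\ref{Ricci vector}). The one place to double-check is the accounting when a triple $(j,k,l)\in\Lambda_{\calB}$ has a repeated index (e.g.\ $l=j$), but your formulation via $\mathbf{e}_j^T+\mathbf{e}_k^T-\mathbf{e}_l^T$ handles this automatically, since the indicator contributions simply add.
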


\begin{remark}\label{good} From Equation \eqref{ricxy} it follows that
an orthogonal  basis $\calB = \{ X_i\}$ is Ricci-diagonal if the sets
$\{ J_{X_i}\}$ and $\{ \ad_{X_i}\}$ are orthogonal.  
\end{remark} 

 Soliton inner products on nilpotent Lie algebras may be characterized
algebraically by the property that the Ricci endomorphism differs from
a scalar multiple of the identity automorphism  by a derivation  $D =
\Ric_\sfQ - \beta \Id$ of the Lie algebra (\cite{lauret01a}).  The
constant $\beta$ is called the {\em soliton constant} for
$(\frakn,\sfQ).$ The soliton constant is always negative when $\frakn$
is nonabelian, and the eigenvalues  for $D$ are positive and rational
(See \cite{heberinv}, \cite{lauret01a}). 

The next theorem gives an easily checked  linear condition that is
equivalent to  an inner product $\sfQ$ on a nilpotent Lie algebra
$\frakn$ being a soliton inner product.
\begin{theorem}[Theorem 1, \cite{payne05}]\label{Ua} Let
$(\frakn,\sfQ)$ be a  nonabelian nilpotent  metric Lie algebra with
orthogonal Ricci-diagonal basis $\calB.$ Let $U$ and $\bfa$ be the
Gram matrix and the structure vector for $(\frakn,\sfQ)$ with respect
to $\calB.$ Let  $m = | \Lambda_\calB |.$  Then $\sfQ$ is a soliton
inner product on $\frakn$ with nilsoliton constant $\beta$ if and only
if $U \bfa = -2\beta \onevector{m}.$  
\end{theorem}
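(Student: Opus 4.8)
The plan is to translate the algebraic soliton condition into a linear condition on the structure constants and then recognize that condition through the root matrix $Y$ and the Gram matrix $U = YY^T.$ First I would record that, because $\calB$ is orthogonal and Ricci-diagonal, the Ricci endomorphism $\Ric_\sfQ$ (the $\sfQ$-self-adjoint operator determined by $\ric(\bfx,\bfy) = \la \Ric_\sfQ\bfx,\bfy\ra$) is diagonalized by the orthonormal basis $\overline{\calB},$ its $i$-th eigenvalue being the $i$-th entry $r_i$ of the Ricci vector $\bfRic_\calB.$ Consequently the candidate derivation $D = \Ric_\sfQ - \beta\Id$ is also diagonal in $\overline{\calB},$ with eigenvalues $d_i = r_i - \beta.$

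The central step is to unwind the derivation identity $D[\bfx,\bfy] = [D\bfx,\bfy] + [\bfx,D\bfy]$ on basis vectors. Writing $[\overline{\bfx_j},\overline{\bfx_k}] = \sum_l \widetilde\alpha_{jk}^l\,\overline{\bfx_l}$ with $\widetilde\alpha_{jk}^l = \sqrt{q_l/(q_jq_k)}\,\alpha_{jk}^l$ as in \eqref{a-tilde}, and using that $D$ acts by the scalar $d_l$ on $\overline{\bfx_l},$ a comparison of the coefficients of $\overline{\bfx_l}$ on the two sides yields $\widetilde\alpha_{jk}^l\,(d_j + d_k - d_l) = 0$ for all $j,k,l.$ Since $\widetilde\alpha_{jk}^l \ne 0$ exactly when $(j,k,l) \in \Lambda_\calB,$ the operator $D$ is a derivation if and only if $d_j + d_k - d_l = 0$ for every $(j,k,l) \in \Lambda_\calB.$ This is precisely where the root vectors enter: because $\bfy_{jk}^l = \bfe_j^T + \bfe_k^T - \bfe_l^T,$ the scalar $d_j + d_k - d_l$ equals $\bfy_{jk}^l\,\bm{d},$ where $\bm{d} = (d_1,\dots,d_n)^T,$ so the whole family of conditions reads $Y\bm{d} = 0.$ I expect this reduction---passing from the operator-valued derivation identity to the scalar relations indexed exactly by $\Lambda_\calB,$ and reading them off as the rows of $Y$---to be the one step requiring genuine care, although it is ultimately just a coefficient comparison.

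To finish, set $\bm{r} = \bfRic_\calB^T,$ so that $\bm{d} = \bm{r} - \beta\onevector{n}$ and the derivation condition becomes $Y\bm{r} = \beta\,Y\onevector{n}.$ Two short computations then close the argument. Each row of $Y$ has entries summing to $1 + 1 - 1 = 1,$ so $Y\onevector{n} = \onevector{m};$ and transposing the Ricci vector formula of Theorem \ref{riccitensor} gives $\bm{r} = -\smallfrac{1}{2}Y^T\bfa,$ whence $Y\bm{r} = -\smallfrac{1}{2}YY^T\bfa = -\smallfrac{1}{2}U\bfa.$ Substituting, the soliton condition $Y\bm{r} = \beta\onevector{m}$ becomes $-\smallfrac{1}{2}U\bfa = \beta\onevector{m},$ that is $U\bfa = -2\beta\onevector{m}.$ Since every implication above is an equivalence, this establishes both directions at once.
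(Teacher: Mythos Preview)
Your argument is correct. The paper does not actually prove this theorem here; it is quoted from \cite{payne05} with only a citation, so there is no in-paper proof to compare against. Your derivation is precisely the natural one: you correctly reduce the algebraic soliton characterization $\Ric_\sfQ = \beta\,\Id + D$ (with $D$ a derivation) to the scalar relations $d_j + d_k - d_l = 0$ on $\Lambda_\calB$ by coefficient comparison, package these as $Y\bm d = 0,$ and then combine the row-sum identity $Y\onevector{n} = \onevector{m}$ with the Ricci vector formula $\bfRic_\calB = -\smallfrac{1}{2}\bfa^T Y$ from Theorem~\ref{riccitensor} to obtain $U\bfa = -2\beta\onevector{m}.$ Each step is indeed an equivalence, so both directions are established. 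One small remark: the row-sum computation $\bfy_{jk}^l\cdot\onevector{n} = 1$ holds regardless of whether $j,k,l$ are distinct, since it is a sum of the coefficients $1+1-1$ rather than a count of nonzero entries, so no extra hypothesis on the structure constants is needed there.
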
 

We conclude the section with some examples that illustrate the
definitions and theorems presented thus far. First we will consider
the indecomposable nilpotent Lie algebras in dimensions three and four.  
\begin{example}\label{h3-a}  Let  $\frakh_3$  be the three-dimensional
Heisenberg algebra, and let $\sfQ$ be an inner product on $\frakh_3.$
There exists an orthogonal basis $\calB = \{\bm{x}_i\}_{i=1}^3$ such
that  $[\bm{x}_1,\bm{x}_2]= \bm{x}_3.$ Then $\sfQ$ is of the form
$\sum_{i = 1}^3  q_i \,dx^i \otimes dx^i .$ There is a  single root
vector $\bm{y}_1 = \bfy_{12}^3 = (1,1,-1)$, and the Gram matrix with
respect to $\calB$ is $U  = [3]$.

Set $(q_1,q_2,q_3)=(1,1,1).$  Then the structure vector is $\bfa =
[1],$ and $U [\bfa] = 3 [1],$ so by Theorem \ref{Ua},
$(\frakh_3,\sfQ)$ is soliton with soliton constant  $\beta = -3/2$. By
Theorem \ref{riccitensor}, the Ricci vector for $(\frakh_3,\sfQ)$
relative to $\calB$ is $ -\smallfrac{1}{2}(1,1,-1).$ Therefore, with
respect to $\calB,$ the Ricci form is represented by a  diagonal
matrix with diagonal entries $-\smallfrac{1}{2}, -\smallfrac{1}{2},
\smallfrac{1}{2}.$ The  derivation  $D = \Ric + \frac{3}{2} \Id$
corresponding to the soliton inner product has eigenvectors $\bm{x}_1,
\bm{x}_2,$ and $\bm{x}_3$ with eigenvalues  $1, 1,$ and $2$
respectively.  The reader may check that any choice of positive $q_1,
q_2$ and $q_3$ yields a soliton inner product; all such metrics are
homothetic to $\sfQ = \sum_{i=1}^3  \,dx^i \otimes dx^i.$  
\end{example}

Next we consider the family of  four-dimensional filiform metric
nilpotent Lie algebras $(\frakl_4,\sfQ)$  studied in
\cite{isenberg-jackson-lu} (under the name A6).  
\begin{example}\label{l4-a} The nilpotent Lie algebra $\frakl_4$ can
be represented with  respect to the basis $\calB =
\{\bm{x}_i\}_{i=1}^4$ so that  all the Lie algebra relations are
determined by the relations $[\bm{x}_1,\bm{x}_2]= \bm{x}_3$ and
$[\bm{x}_1,\bm{x}_3]=\bm{x}_4.$ Define a family of inner products
$\sfQ (\cdot, \cdot ) = \sum_{i = 1}^4  q_i \, dx^i \otimes dx^i $ on
$\frakl_4.$  For all such $\sfQ,$ there are two root vectors $\bm{y}_1
= \bfy_{12}^3 = (1,1,-1,0)$ and  $\bm{y}_2 = \bfy_{13}^4 =
(1,0,1,-1),$ and  the Gram matrix is $U = (\begin{smallmatrix} 3 & 0
\\ 0 & 3  \end{smallmatrix})$.

If we define the inner product $\sfQ^\star$ by setting
$(q_1,q_2,q_3,q_4)=(1,1,1,1)$, then the structure vector for
$(\frakl_4,\sfQ^\star)$ relative to $\calB$ is $\bfa = \onevector{2},$
and $U \bfa = 3 \onevector{2},$ so by Theorem \ref{Ua},
$(\frakn,\sfQ)$  is soliton with soliton constant $\beta = -3/2$.
The Ricci vector is 
\[ \bfRic = -\smallfrac{1}{2} (\bfy_{12}^3 + \bfy_{13}^4) =
\smallfrac{1}{2}(-2,-1,0,1)\] and the corresponding derivation   $D =
\Ric + \frac{3}{2} \Id$ has eigenvalues
$\smallfrac{1}{2},\smallfrac{2}{2},\smallfrac{3}{2},$ and
$\smallfrac{4}{2}.$
\end{example}

 In the next example we  consider the five-dimensional Heisenberg Lie
algebra.
\begin{example}\label{h5-a} Let $(\frakh_5,\sfQ)$ be the
five-dimensional Heisenberg Lie algebra endowed with an inner product
$\sfQ.$  It is possible to choose an orthogonal basis $\calB =
\{\bfx_i\}_{i=1}^5$ such that all the Lie algebra relations are
determined by
\[ [\bfx_1,\bfx_2]= \bfx_5 \qquad \text{and} \qquad
[\bfx_3,\bfx_4]=\bfx_5.\] The two root vectors are $\bfy_{12}^5=
(1,1,0,0,-1)$ and $\bfy_{34}^5= (0,0,1,1,-1),$ and the Gram matrix $U$
is  $(\begin{smallmatrix} 3 & 1 \\ 1 & 3 \end{smallmatrix}).$    If
$\sfQ = \sum_{i=1}^5 q_i \, dx^i \otimes dx^i,$ the Ricci vector is 
\[ \bfRic = -\smallfrac{1}{2} \left( \frac{q_5}{q_1 q_2}(1,1,0,0,-1) +
\frac{q_5}{q_3 q_4}(0,0,1,1,-1)  \right).\] Setting $q_1= q_2 = \cdots
= q_5 = 1$ yields a soliton inner product with structure vector
$(1,1)^T$ and Ricci vector $\frac{1}{2}(-1,-1,-1,-1,2)$.
\end{example}

\subsection{The Ricci flow on the space of metric Lie algebras}
Isometries are preserved under the Ricci flow, so that as an initial
metric $g_0$ on a manifold $M$ evolves under the Ricci flow, the
metric $g_t$ at time $t$ has the isometry group $\Iso(g_0)$ of the
initial metric  as a subgroup of its isometry group $\Iso(g_t).$
Therefore, the  set of homogeneous metrics on $M$ is invariant under
the Ricci flow.

Let $(G,g)$ be an $n$-dimensional simply connected Lie group endowed
with left-invariant metric $g$ and  with corresponding metric Lie
algebra $(\frakg,\sfQ).$  Since the group $G$ acts simply transitively
on itself by isometries,  there exists an $G$-invariant global framing
of $G$.  The Ricci flow at any point   can therefore be expressed in
terms of the Ricci flow  at the tangent space to the identity.   We
view the Ricci flow as a flow $\phi_t$ on the space $\calP^+(\frakg)$
of  inner products on the vector space  $\frakg.$  We will call this
flow the {\em Ricci flow} for the  Lie  algebra $\frakg,$ and we will
write the solution for the initial condition $\sfQ_0$ as $\sfQ_t.$ 
 
The Ricci flow can be thought of as a flow on the space of metric Lie
algebras (modulo isometry).  We now describe the structure of that
space.  After fixing a basis for  a  Lie algebra $\frakg$ of dimension
$n,$ the Lie algebra can be identified with a  point  $\mu$ in
$\Lambda^2 V^\ast \otimes V,$  where $V$ is an $n$-dimensional vector
space.  We use  $\frakg_\mu$ to denote the algebra with underlying
space $V$ and its multiplication defined by $\mu.$  

A metric  Lie algebra $(\frakg,\sfQ)$ of dimension $n$ is a point  in 
\[ \calX_n =  \{ \mu \in \Lambda^2 V^\ast \otimes V  : \,
\text{$\frakg_\mu$ is a Lie algebra}\} \times \calP^+(V),   \] while a
metric nilpotent Lie algebra of dimension $n$ is a point in
\[ \calN_n =  \{ \mu \in \Lambda^2 V^\ast \otimes V  : \,
\text{$\frakg_\mu$ is a nilpotent Lie algebra}\} \times
\calP^+(V).   \] Note that the Jacobi identity and the nilpotency
conditions for a Lie algebra $\frakg_{\mu}$ are both polynomial
constraints on the structure constants so these sets are algebraic
subsets. 

Define an equivalence relation $\sim$ on $\calX_n$ and $\calN_n$ such
that  $(\frakg_\mu, \sfQ_1) \sim (\frakg_\nu, \sfQ_2)$ if and only if
$(\frakg_\mu, \sfQ_1)$ and  $(\frakg_\nu, \sfQ_2)$ are isometric.
Then  $ \widetilde{\calX_n}  = \calX_n   / \hskip -4pt \sim$
parametrizes the spaces of metric Lie algebras, and  $
{\widetilde{\calN_n}} =  \calN_n  / \hskip -4pt \sim $ parametrizes
the spaces of metric nilpotent Lie algebras.

E. Wilson showed that the simply connected nilmanifolds corresponding
to metric Lie algebras $(\frakn_\mu, \sfQ_1)$ and $(\frakn_\nu,
\sfQ_2)$ are isometric if and only if there is an isometric
isomorphism mapping one to the other (\cite{wilson82}). Hence, the
space $\widetilde{\calN_n}$ can be identified with the quotient space
for the natural action of $O(n)$ on $\calN_n.$  This action is the
restriction of the natural action of $GL_n(\boldR)$ on $\Lambda^2 V^\ast
\otimes V \times \calP^+(V),$  and is
defined as follows: for $g$ in $GL_n(\boldR),$
\[ g (\frakg_\mu, \sfQ ) =  (\frakg_{g \mu}, g \sfQ),\] where $g \mu
\in \Lambda^2 \frakg^\ast \otimes \frakg$ is given by
 \[ (g \mu) (\bfx,\bfy) = g \mu(g^{-1} \bfx, g^{-1} \bfy),\] and the
inner product $g\sfQ$ satisfies $g\sfQ(\bfx,\bfy) =
\sfQ(g\bfx,g\bfy).$

\subsection{The  projectivized Ricci flow on the space of
volume-normalized metric Lie algebras} Let $ \calP^+(\frakg) / \hskip
-4pt \sim$ denote the space  of  volume-normalized inner products on a
Lie algebra $\frakg,$ obtained through the equivalence relation $\sfQ
\sim \lambda \sfQ$ for $\sfQ$ in $\calP^+(\frakg)$ and $\lambda$ in
$\boldR^+.$ Denote the equivalence class of inner product $\sfQ$ by
$\overline{\sfQ}.$ The Ricci flow $\phi_t$ for the  Lie algebra
$\frakg$  projects to  a flow
\[ \overline{\phi}_t :  \calP^+(\frakg) / \hskip -4pt \sim  \,  \to
\calP^+(\frakg) / \hskip -4pt \sim\] on $\calP^+(\frakg)  / \hskip
-4pt \sim$ because $\phi_t(\lambda \sfQ) = \lambda \phi_t(\sfQ)$ for
any  $\sfQ$ in $\calP^+(\frakg)$ and any $\lambda > 0.$ We  call this
flow  the {\em projectivized Ricci flow} for $\frakg,$ and we will
write  $\overline{\phi}_t (\overline  \sfQ_0)$ as $\overline{\sfQ}_t,$
for $\overline \sfQ_0$ in  $ \calP^+(\frakg) / \hskip -4pt \sim.$ The
space $ \calP^+(\frakg) / \hskip -4pt \sim$ has a natural closure, the
compact  set  $\calP^{\ge 0}(\frakg)  / \hskip -4pt \sim,$ where
$\calP^{\ge 0}(\frakg)$ is the set of nontrivial  positive semidefinite
symmetric bilinear forms  on $\frakg$, and the equivalence relation
$\sim$ is defined as before.

We say that  a  metric Lie algebra $(\frakg,\sfQ)$   {\em collapses}
under the Ricci flow if  the limit   $  \overline  \sfQ_\infty =
\lim_{t \to \infty} \overline{\sfQ_t}$ exists in $ \calP^{\ge
0}(\frakg) / \hskip -4pt \sim$  but is in the boundary of  $
\calP^+(\frakg) / \hskip -4pt \sim$; that is, any representative
$\sfQ_\infty$ for the limiting normalized symmetric bilinear form $
\overline \sfQ_\infty$ is not positive definite.   We will then say
that $\sfQ$ and $\overline{\sfQ}$ {\em collapse to} $\overline
\sfQ_\infty.$

Let $\calX_n$ and $\calN_n$ be as defined previously.  Define a new
equivalence relation $\sim$ on $\calX_n \setminus (\{0\} \times
\calP^+(V))$ and $\calN_n \setminus (\{0\} \times \calP^+(V))$ so that
$(\frakg_\mu, \sfQ_1) \sim (\frakg_\nu, \sfQ_2)$ if and only if
$(\frakg_\mu, \sfQ_1)$ and $(\frakg_\nu, \sfQ_2)$ are homothetic. Let
\begin{align*} \overline{\calX_n}  &=  (\calX_n  \setminus (\{0\}
\times \calP^+(V))) / \hskip -4pt \sim, \\ \overline{\calN_n} &=
(\calN_n  \setminus (\{0\} \times \calP^+(V))) / \hskip -4pt \sim. 
\end{align*}  These spaces parametrize the families of nonabelian 
volume-normalized metric Lie
algebras and  nonabelian volume-normalized  
nilpotent metric Lie algebras, respectively.
We will represent the equivalence class of $(\frakg,\sfQ)$ in
$\overline{\calX_n}$ or $\overline{\calN_n}$   by
$\overline{(\frakg,\sfQ)}.$  Note that $(\frakg_{\mu},\sfQ) \sim
(\frakg_{\lambda \mu}, \sfQ),$ for $\mu \in  \Lambda^2 \frakg^\star
\otimes \frakg,$  $\lambda > 0,$ and $\sfQ$ in $ \calP^+(\boldR^n).$  

We extend the projectivized Ricci flow for a single Lie algebra to a
flow, also denoted by $\overline{\phi_t},$ on the spaces
$\overline{\calX_n}$ and  $\overline{\calN_n}.$  It is defined by 
\[\overline{\phi_t} \left( \overline{(\frakg, \sfQ)} \right) =
\overline{(\frakg,    \sfQ_t )},\] where $\sfQ_t$ is the solution
$\phi_t(\sfQ)$  to the Ricci flow  for the nilpotent Lie algebra
$\frakg$ with initial condition $\sfQ.$ The flow  is well-defined
since the Ricci flow commutes with homotheties.  For a metric Lie
algebra $(\frakg,\sfQ),$ $\overline{\phi_t}\left( \overline{(\frakg,
\sfQ)} \right)$ can be identified with $(\frakg,
\overline{\phi_t}(\overline{\sfQ})),$ where $\overline{\phi_t}$ is the
projectivized Ricci  flow for the single Lie algebra $\frakg.$ Note
the the algebraic structure is constant under the flow:   for all
finite $t\ge0,$ the  underlying nilpotent Lie algebras for
$\overline{(\frakg,\sfQ)}$ and for  $\psi_t(\overline{(\frakg,\sfQ)})$
are isomorphic. 

The next simple example illustrates how a metric Lie algebra can
collapse under the Ricci flow while having a nondegenerate limit point
for the projectivized Ricci flow.  

\begin{example}\label{h3-1} It was shown in \cite{isenberg-jackson-92}
that  the Ricci flow for the three-dimensional  Heisenberg Lie algebra
$\frakh_3$ is given by 
\[ [\sfQ_t]_{\calB} = \begin{bmatrix}  (3ct + 1)^{1/3} & 0 &  0 \\ 0 &
(3ct + 1)^{1/3} & 0 \\ 0  & 0 &   (3ct + 1)^{-1/3}\\
\end{bmatrix}, \] relative to a basis $\calB = \{ \bfx_1, \bfx_2,
\bfx_3 \}.$ The positive constant $c$ is the structure constant
$\alpha_{12}^3$ for  $\calB.$

 All inner products $\sfQ$ on $\frakh_3$  are homothetic, so   the
space $\overline{\calN_3}$ consists of a single point: the equivalence
class of  $(\frakh_3,\sfQ_0).$  At all finite times, the metric
nilpotent Lie algebras $(\frakh_3, \sfQ_t)$ that are solutions to the
Ricci flow with initial condition $\sfQ_0$ project to the same point
$\overline{(\frakh_3,\sfQ_0)}$ in $\calN_3.$  Therefore, the  point
$\overline{(\frakh_3,\sfQ_0)}$  is a fixed point for the projectivized
Ricci flow on $\overline{\calN_3}.$

Yet the inner product $\sfQ_0$  collapses under the flow, because
\[ \lim_{t \to \infty}  \frac{1}{ (3ct + 1)^{1/3}} 
\begin{bmatrix}  (3ct + 1)^{1/3} & 0 & 0 \\ 0  &   (3ct + 1)^{1/3} & 0
\\ 0  & 0 &   (3ct + 1)^{-1/3}\\
\end{bmatrix} =  \begin{bmatrix}  1 & 0 & 0  \\ 0 & 1  & 0 \\ 0 &  0
& 0 \\
\end{bmatrix}, \] forcing $\lim_{t \to \infty} \overline{\sfQ_t}$ to
be in  the boundary of  $ \calP^+(\frakh_3) / \hskip -4pt \sim.$
\end{example}

\subsection{The Lie bracket flow and the projectivized Lie bracket
flow}

 Let $(\frakg,\sfQ)$ be a metric Lie algebra, and let 
$\calB = \{ \bfx_i\}_{i=1}^n$ be an orthonormal
 basis for $\frakg,$ with
respect to which $\sfQ$ is written as $\sum_{i=1}^n dx^i \otimes dx^i.$ 
We will say that  $\calB$ is a {\em  stably Ricci-diagonal basis} if
$\calB$ is Ricci-diagonal for all metrics $\sfQ^\prime = \sum_{i=1}^n c_i 
dx^i \otimes dx^i,$ with $c_i > 0$ for $i = 1, \ldots, n.$
 It is often possible to find stably Ricci-diagonal bases. All
three-dimensional unimodular metric Lie algebras and many four-dimensional
metric Lie algebras have such bases (\cite{isenberg-jackson-92}, 
\cite{isenberg-jackson-lu}).   

If the basis $\calB$ is a stably Ricci-diagonal  basis
for $(\frakg,\sfQ)$, both the inner product and
the Ricci form remain diagonal with respect to $\calB$ under the Ricci
flow, and the positive functions $q_1, q_2, \ldots, q_n$ encode
the solution $\sfQ_t = \sum_{i=1}^n q_i \, dx^i \otimes dx^i$  to the
Ricci flow for $\frakg$ with initial condition  $\sfQ_0 = \sfQ.$   For
$1 \le j,k,l \le n$, define the function $ a_{jk}^l$ of $t$ by  
\begin{equation}\label{adef} a_{jk}^l  = \frac{q_l}{q_j q_k}
(\alpha_{jk}^l)^2 ,
\end{equation}  where $\alpha_{jk}^l$ is the structure constant
for $(\frakg, \sfQ_0)$ relative to the $\sfQ_0$-orthonormal basis
 $\calB$ as defined in Equation
\eqref{alpha-def}.   These functions represent structure constants
for $(\frakg,\sfQ_t)$  relative to the
$\sfQ_t$-orthonormal basis $\overline{\calB_{\sfQ_t}}$  defined in Section
\ref{structure}. 

 Let $a_1, \ldots,
a_m$ be an enumeration of  such functions  $ a_{jk}^l$ for $(j,k,l)$
in $\Lambda_\calB,$ in dictionary order.  Define the {\em structure
vector function} $\bfa : \boldR \to \boldR^m$  by 
\begin{equation}\label{def-of-a} \bfa_t = \bfa(t)  = (a_1(t), \ldots,
a_m(t))^T\end{equation} for each $t;$  this is just the $m \times 1$
structure vector relative to the basis  $\overline{\calB_{\sfQ_t}}$
for $(\frakg,  \sfQ_t).$  Observe that the vector $\bfa_t$ is positive at all
times $t \ge 0.$ We call the flow
 $\bfa_t$ the {\em Lie bracket flow}.

The structure vector function $\bfa_t$ defined in Equation
\eqref{def-of-a} takes values in $\boldR^m.$  We are interested in the
asymptotic behavior of $\bfa_t$ modulo rescaling; that is, the values
of $[\bfa_t]$ in projective space $P^{m-1}(\boldR).$ 
Since entries of $\bfa_t$ are
positive for all $t>0,$ we will describe the flow of $[\bfa_t]$
using the homogeneous variables
$s_1, \ldots, s_{m-1}$  given by
\begin{equation}\label{s-def} \bfs=  (s_1, \ldots, s_{m-1}) =
\left(\frac{a_1}{a_m}, \frac{a_1}{a_m}, \ldots,  \frac{a_{m-1}}{a_m}
\right).\end{equation}
These coordinates  parametrize the affine algebraic subset 
$\boldA_{m-1} = \{  (s_1 : s_2 : \cdots : s_{m-1}: 1) \}$ 
of  $P^{m-1}(\boldR).$ 

\subsection{Limits for the Ricci flow and the Lie bracket flow as time 
goes to infinity}

In general, if $[\bfa_t]$ converges to $[\bfa_\infty]$ under
 the projectivized Lie bracket flow (relative to
some stably Ricci-diagonal basis), the  Ricci flow does
not necessarily limit on a Lie group endowed with a left-invariant 
metric whose corresponding  metric Lie algebra has structure
constants equal to  $[\bfa_\infty].$
In general, caution must be 
exercised in extracting limits for the projectivized Ricci flow from limits for 
the projectivized Lie bracket flow.    
However, in the case of the projectivized Ricci flow on the space of
volume-normalized metric nilpotent Lie algebras, problems
 do not arise: one can say  that the  structure constants for the
  metric Lie algebra associated to the limiting homogeneous space 
are given by  $[\bfa_\infty].$ 

 It was demonstrated in \cite{wilson82} that
the isometry group of a simply connected
nilmanifold $N$ associated to metric nilpotent Lie algebra $(\frakn,\sfQ)$
is equal to the semidirect product $K \ltimes N$ 
of translations from $N$ and a compact isotropy group $K,$ and
the Lie algebra of $K$ is equal to $\Aut \left((\frakn, \sfQ)\right).$  
 Suppose that $\overline{(\frakn, \sfQ_t)}$ converges to
 $\overline{(\frakn_\infty, 
\sfQ_\infty)}.$    If $f \in \Aut\left((\frakn, \sfQ_t)\right)$ for all $t,$
 then $f \in \Aut\left((\frakn_\infty, \sfQ_\infty)\right),$  where
we identify $f$ with a linear map of the vector space $V$
used to define $\calN_n.$   
In contrast, whenever $\frakn \not \cong \frakn_\infty,$
the $n$-dimensional group of translational isometries for finite time does not 
coincide with the group of translational isometries for the limiting
nilmanifold. 

 There is a one-to-one correspondence between
 Lie brackets in  $\Lambda^2 V^\ast
\otimes V,$ modulo the action of $GL_n(\boldR),$  and
metric nilpotent Lie algebras $(\frakn,\sfQ)$ in $\calN_n.$  
The space $\calN_n$ is closed and invariant
for the flow $\phi_t,$ and if $\bfa_t$ is nilpotent for all $t,$ 
and $[\bfa_t] \to [\bfa_\infty],$ then $\bfa_\infty$ is nilpotent.  
Suppose that $\lim_{t \to \infty} [\bfa_t] = [\bfa_\infty].$  
Then there is a corresponding limit of metric nilpotent Lie algebras
$\lim_{t \to \infty} \overline{(\frakn,\sfQ_t)} = 
\overline{(\frakn_\infty,\sfQ_\infty)}$ in $\overline{\calN_n}.$  
At all finite times, the geometry of the nilmanifold class corresponding 
to $\overline{(\frakn,\sfQ_t)}$ is completely determined by left-multiplication
of the inner product $\sfQ_t$ over the Lie group $N = \exp(\frakn),$ while
 the geometry of the nilmanifold class corresponding 
to $\overline{(\frakn_\infty,\sfQ_\infty)}$ 
is completely determined by left-multiplication
of the inner product $\sfQ_\infty$ over the Lie group 
$N_\infty = \exp(\frakn_\infty).$
Hence, the local geometry (modulo homothety),
 of the nilmanifolds for  $\overline{(\frakn,\sfQ_t)}$
converges to the  local geometry  (modulo homothety) for the nilmanifold class 
defined by  $\overline{(\frakn_\infty,\sfQ_\infty)}.$ 

Observe that the vector $\bfa_t$ determines the metric nilpotent Lie algebra
 $(\frakn, \sfQ_t)$ up to  isometry
because it tells us the Lie algebra structure's structure constants 
relative to  an
orthonormal basis.   Hence  the  Lie bracket flow determines the Ricci flow
and the two flows are equivalent. 

Suppose that one could solve for $\bfa_t$ exactly.  
 Equation \eqref{Ricci vector} describes
the Ricci vector in terms of $\bfa_t$, which in turn gives  the Ricci
form $\ric_{\sfQ_t}$ as a function of time.  Using the  equation for
the Ricci flow,  one can solve for the functions  $q_1, \ldots, q_m$
giving the inner product $\sfQ_t$ as a function of time by integrating
$-2 \ric_{\sfQ_t}$ with respect to $t.$   Also note that since the
connection and curvatures for $(\frakn, \sfQ_t)$ depend only on the
structure constants relative to an orthonormal basis, these geometric
quantities are determined by the Lie bracket flow.   In order to
compute or estimate geometric quantities, it is sufficient to consider 
only  the
Lie bracket flow, which in many cases is easier to work with than the
Ricci flow.

\section{Systems of ordinary differential equations  } \label{systems}

In this section, we  set up  systems of ODE's for the Ricci flow and
for the projectivized bracket flow.

\subsection{O.D.E.'s for the Ricci flow and the Lie bracket
flow}\label{ODEs}

The next theorem describes how the structure vector function $\bfa_t$
evolves under the  Ricci flow, assuming the existence of a stably
Ricci-diagonal basis.
\begin{theorem}\label{ricci-system}  Let $(\frakg,\sfQ)$ be a metric
Lie algebra with stably Ricci-diagonal basis $\calB = \{\bm{x}_i\}_{i =
1}^n.$  Let  $Y$ be the root matrix and  let
$U$ be the Gram matrix for $(\frakn,\sfQ)$ relative to $\calB.$
   Let  $\sfQ_t$ be the solution to the Ricci flow for $\frakg$
with initial condition $\sfQ_0=\sfQ,$  and  let the functions $q_1,
\ldots, q_n$ be defined by $\sfQ_t = \sum_{i=1}^n q_i \,dx^i \otimes
dx^i$.  Let $\bfa_t = (a_i(t))$ be the structure vector function for
$(\frakg,\sfQ_t)$ as defined in Equation \eqref{def-of-a}.  Then 
\begin{equation}\label{ricci-flow-gen}  
\left(\frac{q_1^\prime}{q_1},
\ldots, \frac{q_n^\prime}{q_n} \right)  =  -2 \,
\bfRic_\calB,\end{equation} and
\begin{equation}\label{Z-gen} 
 \left( \frac{a_1^\prime}{a_1},  \frac{a_2^\prime}{a_2},  \ldots,
\frac{a_m^\prime}{a_m}\right) = 2 \, \bfRic_\calB  Y^T. 
\end{equation} In the case that $\frakg$ is nonabelian and nilpotent, 
\begin{equation}\label{ricci-flow} 
 \left(\frac{q_1^\prime}{q_1},
\ldots, \frac{q_n^\prime}{q_n} \right)  =  \bfa^T Y,\end{equation} 
and
\begin{equation}\label{Z} 
 \left( \frac{a_1^\prime}{a_1},  \frac{a_2^\prime}{a_2},  \ldots,
\frac{a_m^\prime}{a_m}\right) = - \bfa^T U.
\end{equation} 
\end{theorem}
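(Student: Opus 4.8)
The plan is to derive the two general identities \eqref{ricci-flow-gen} and \eqref{Z-gen} directly from the Ricci flow equation, and then obtain the two nilpotent identities \eqref{ricci-flow} and \eqref{Z} by substituting the Ricci vector formula of Theorem \ref{riccitensor}. First I would establish \eqref{ricci-flow-gen}. Because $\calB$ is stably Ricci-diagonal, both $\sfQ_t$ and $\ric_{\sfQ_t}$ stay diagonal with respect to $\calB$ for all $t$, so the Ricci flow equation $\partial_t \sfQ_t = -2\,\ric_{\sfQ_t}$ reduces to its diagonal entries. Since $\sfQ_t(\bfx_i,\bfx_i) = q_i$ and $\|\bfx_i\|_{\sfQ_t}^2 = q_i$, reading off the $(i,i)$ entry gives $q_i' = -2\,\ric_{\sfQ_t}(\bfx_i,\bfx_i)$, and dividing by $q_i$ yields $q_i'/q_i = -2\,\ric_{\sfQ_t}(\bfx_i,\bfx_i)/q_i = -2\,(\bfRic_\calB)_i$, which is \eqref{ricci-flow-gen} componentwise.

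Next I would obtain \eqref{Z-gen} by logarithmic differentiation of the defining relation $a_{jk}^l = (q_l/q_j q_k)(\alpha_{jk}^l)^2$. The crucial point is that the structure constants $\alpha_{jk}^l$ relative to the \emph{fixed} basis $\calB$ are constant in $t$; only the lengths $q_i$ evolve. Hence $(a_{jk}^l)'/a_{jk}^l = q_l'/q_l - q_j'/q_j - q_k'/q_k$, which equals $-(q_1'/q_1,\ldots,q_n'/q_n)\cdot \bfy_{jk}^l$ since $\bfy_{jk}^l = \bfe_j^T + \bfe_k^T - \bfe_l^T$. Substituting \eqref{ricci-flow-gen} turns this into $2\,\bfRic_\calB\cdot\bfy_{jk}^l$, and assembling these expressions over all $(j,k,l)\in\Lambda_\calB$, which index the rows of $Y$, gives the matrix identity \eqref{Z-gen}.

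Finally, for the nonabelian nilpotent case I would invoke the formula $\bfRic_\calB = -\frac{1}{2}\bfa^T Y$ from Theorem \ref{riccitensor}. Substituting into \eqref{ricci-flow-gen} gives $(q_1'/q_1,\ldots,q_n'/q_n) = -2(-\frac{1}{2}\bfa^T Y) = \bfa^T Y$, which is \eqref{ricci-flow}; substituting into \eqref{Z-gen} gives $2(-\frac{1}{2}\bfa^T Y)Y^T = -\bfa^T Y Y^T = -\bfa^T U$ using $U = YY^T$, which is \eqref{Z}. Thus both nilpotent identities follow from the general ones by a single algebraic substitution.

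The main obstacle here is not a hard estimate but the careful bookkeeping of what is held fixed and what evolves: one must distinguish the fixed algebraic basis $\calB$, relative to which the structure constants $\alpha_{jk}^l$ are time-independent, from the evolving orthonormal frame $\overline{\calB_{\sfQ_t}}$, relative to which the structure constants are the time-dependent square roots of the entries of $\bfa_t$. One must also verify that the stably Ricci-diagonal hypothesis genuinely guarantees that the diagonality of $\sfQ_t$ and $\ric_{\sfQ_t}$ persists for all $t\ge 0$, so that the flow remains within the class of diagonal metrics $\sum_i q_i\,dx^i\otimes dx^i$ and the componentwise reading of the flow equation is justified throughout.
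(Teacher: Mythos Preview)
Your proposal is correct and follows essentially the same approach as the paper: derive \eqref{ricci-flow-gen} by reading off the diagonal entries of the Ricci flow equation, obtain \eqref{Z-gen} by logarithmically differentiating $a_{jk}^l = (q_l/q_jq_k)(\alpha_{jk}^l)^2$ and substituting \eqref{ricci-flow-gen}, and then specialize to the nilpotent case via $\bfRic_\calB = -\tfrac{1}{2}\bfa^T Y$ from Theorem~\ref{riccitensor} together with $U = YY^T$. Your emphasis on distinguishing the fixed structure constants $\alpha_{jk}^l$ from the evolving ones relative to $\overline{\calB_{\sfQ_t}}$ is exactly the bookkeeping the argument requires.
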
 

\begin{remark}
Observe that the O.D.E.'s for $a_1, a_2, \ldots, a_m$ are quadratic,
and they can always be put in log-linear form: for example, 
Equation \eqref{Z-gen} becomes
 $(\ln a_i)^\prime = -\sum_{j=1}^m u_{ij} a_j ,$ for $i =1 ,\ldots, m.$  
\end{remark}

\begin{remark}
The functions $a_i^\prime$ are smooth and locally bounded on 
$\boldR^m;$ hence solutions for $a_i(t)$ exist for all $t > 0.$
By the same reasoning, solutions to $q_i(t)$ exist for all time.
However they are not guaranteed to be positive for all time: for
example, the round metric on $SU(2)$ becomes zero in finite
time (\cite{isenberg-jackson-92}).
\end{remark}

\begin{proof}  With respect to the basis $\calB,$ the inner product is
represented by the diagonal matrix
\[  [\sfQ]_\calB = \diag(q_1, q_2, \ldots, q_n),\] and the Ricci form
is represented by a diagonal matrix $[\ric]_\calB.$ If we rewrite  the
Ricci form with respect to the orthonormal basis  $\overline{\calB}$
obtained by rescaling $\calB,$ we get
\[ [\ric]_\calB  = [\sfQ]_\calB [\ric]_{\overline \calB} .\] By
definition of the Ricci vector, the matrix  $[\ric]_{\overline \calB}$
is the diagonal matrix whose diagonal entries are the entries  of the
Ricci vector $\bfRic_{\calB}$ as defined in Equation
\eqref{def-of-Ric}.  By equating the diagonal entries of the matrices
on both sides of the matrix equation $ [\sfQ_t]_\calB^\prime = -2[\ric_{\sfQ_t}]_\calB $,  
the Ricci flow may then be written 
\begin{equation}\label{q flow} \left (\frac{q_1^\prime}{q_1}, \ldots,
\frac{q_n^\prime}{q_n} \right)  =  -2 \, \bfRic_{\calB}
.\end{equation}

Changing variables to $\ln q_i$, for $i=1, \ldots, m,$ yields
\begin{equation}\label{flow-equation} (\ln q_1, \ldots, \ln
q_n)^\prime =  -2 \, \bfRic_\calB  .\end{equation} Now we compute the
derivative of the functions 
$\ln a_i,$ for $i = 1, \ldots, m,$ defined in Equation \eqref{adef},
temporarily switching
to the indexing $a_{jk}^l, (i,j,k) \in \Lambda_\calB,$ for the
functions $a_i, i =1, \ldots, m.$ For $(i,j,k) \in \Lambda_\calB$, the
derivative of  $\ln(a_{jk}^l)$  is 
\begin{align*}  
\left( \ln (a_{jk}^l)  \right)^\prime 
&= \left( \ln
\left( (\alpha_{jk}^l)^2 \frac{q_l}{q_jq_k} \right) \right)^\prime \\
 &=   -(\ln q_j)^\prime - (\ln q_k)^\prime +
(\ln q_l)^\prime \\ &= - \left( \ln q_1, \ldots, \ln q_n\right)^\prime
\cdot \bfy_{jk}^l. 
\end{align*} 
Using Equation \eqref{flow-equation} to rewrite the right
side of the previous line, we get
\[ \left( \ln (a_{jk}^l)  \right)^\prime =  2  \, \bfRic_\calB
(\bfy_{jk}^l)^T \] These $m$ linear equations conjoin to become
Equation \eqref{Z-gen}.  If $\frakg$
is nilpotent,  by Equation \eqref{Ricci vector} of  Theorem
\ref{riccitensor},  $\bfRic_\calB = -\smallfrac{1}{2} \bfa^T Y.$  
Substitution of this into Equations \eqref{ricci-flow-gen} and \eqref{Z-gen}
and the identity $YY^T = U$ yield
Equations \eqref{ricci-flow} and \eqref{Z}.
\end{proof} To illustrate Theorem \ref{ricci-system}, we revisit some
of the previous examples.  
\begin{example} For $(\frakh_3,\sfQ)$ as in Example \ref{h3-a}, if we
let $a_1 = \frac{q_3}{q_1q_2},$ Equation \eqref{Z} reduces to the single
equation
\[ a_1^\prime = -3a_1^2 .\]  Integrating, we get $a_1(t) = (3t +
c)^{-1},$  where $1/c$ is the structure constant $\alpha_{12}^3$ for
the initial orthonormal basis.  Then   
Equation \eqref{ricci-flow} is
\[ (\ln q_1, \ln q_2, \ln q_3)^\prime = (3t + c)^{-1} (1,1,-1),\]
and integration gives
the solutions $q_1, q_2, q_3$ already presented in Example \ref{h3-1}.

For  $(\frakl_4,\sfQ)$ as in Example \ref{l4-a}, when  we let $a_1 =
\frac{q_3}{q_1q_2}$ and  $a_2 =   \frac{q_4}{q_1q_3},$ the system of
equations from Equation \eqref{Z} is
\[  a_1^\prime = -3a_1^2, \qquad   a_2^\prime = -3a_2^2 ,\] which
decouples, so it is not hard to solve for

$a_1$ and $a_2,$ and then $q_1, q_2, q_3$ and $q_4.$
For the metric Heisenberg algebra $(\frakh_5,\sfQ)$ as in Example
\ref{h5-a}, after letting $a_1 =  \frac{q_5}{q_1q_2}$ and  $a_2 =
\frac{q_5}{q_3q_4},$ we get the system
\begin{align}\label{h5-odes} a_1^\prime &= -3a_1^2 - a_1a_2 \\
a_2^\prime &= -a_1a_2  -3a_2^2 , \notag \end{align} which, in
contrast to the first two examples, has no simple explicit general
solution.  Instead, it is necessary to make a qualitative analysis of
a projectivization of such a system; this is the goal of the next
section.
\end{example}

\subsection{O.D.E.'s for the projectivized Lie bracket
flow}\label{odes-hom}

Now we define some matrices and hyperplanes that are needed to state
the main theorem of the section.  Define the $(m-1) \times m$ matrix
$P$ by
\[ P = \begin{bmatrix}   1 & 0 & 0 &  \cdots & 0 & -1 \\  0 &  1 & 0 &
\cdots & 0 &-1 \\ 0 & 0  & 1  &           & 0 & -1 \\ \vdots & \vdots
& &  \ddots &\vdots & \vdots \\  0  &  0 & 0  &\cdots &1 & -1\\
\end{bmatrix}.\] Let $U$ be an $m \times m$  matrix.   For $i = 1,
\ldots,m-1,$ define  the vector $\bfn_i$ by  $\bfn_i = \bfe_i^T
PU,$ where $\{\bfe_i\}_{i=1}^{m-1}$ is the standard orthonormal
basis for $\boldR^{m}.$  The vector  $\bfn_i$ is the $i$th row of  the
matrix $U$ minus the $m$th row of the matrix $U.$

The vectors $\bfn_1, \ldots, \bfn_{m-1}$ define a set of $m-1$
hyperplanes  $\calH_{1}^0, \calH_{2}^0, \ldots, \calH_{m-1}^0,$  where
we let
\begin{equation}\label{hyperplane definition} \calH_i^0 =
\bfn_i^\perp, \quad \text{for $i=1, \ldots, m-1$.}\end{equation} For
$i=1,  \ldots, m-1,$  define the open half-spaces  $\calH_{i}^+$ and
$\calH_{i}^-$ by 
\begin{align*}  \calH_{i}^+ &= \{ \bfa \in \boldR^m\, | \, \bfa \cdot
\bfn_i  > 0 \}\\ \calH_{i}^- &=  \{ \bfa \in \boldR^m \, | \, \bfa
\cdot \bfn_i  < 0 \}.
\end{align*}

The next proposition describes some properties of these hyperplanes and
their normal vectors, when $U$ is the Gram matrix for  a metric
nilpotent Lie algebra $(\frakn,\sfQ)$ relative to an orthogonal basis
$\calB.$ 
\begin{prop}\label{PU}  Let $(\frakn,\sfQ)$ be a nonabelian  metric
nilpotent Lie algebra with orthogonal  Ricci-diagonal basis $\calB.$
If $U$ is the Gram matrix for $(\frakn,\sfQ)$ with respect to
$\calB,$ then the intersection $\cap_{i=1}^{m-1} \calH_i^0$ of the
hyperplanes $\calH_{1}^0, \calH_{2}^0, \ldots, \calH_{m}^0$ is equal
to
\[   \ker PU = \{ \bfv \, : \, U \bfv = \lambda \onevector{m} \quad
\text{for some $\lambda$ in $\boldR$}\}.
 \] If in addition,  $\alpha_{jk}^j \ne 0$ for all $1 \le j,k \le n,$
then  for all $i=1, \ldots, m-1,$ 
\begin{enumerate}
\item{ the $i$th entry of the vector $\bfn_i$ is positive and the
$m$th entry of  $\bfn_i$ is negative, and} \label{neg-m-m}
\item{ the point $(0,0,\ldots,0,1)$ lies in the open half space
$\calH_i^-.$ }\label{0 is neg}
\end{enumerate}
\end{prop}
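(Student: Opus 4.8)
The plan is to treat the kernel identity as pure linear algebra and the two sign statements as a short application of the Cauchy--Schwarz inequality to the root vectors. For the first assertion I would simply read off the components of $PU\bfa$. Since $\bfn_i = \bfe_i^T PU$, the scalar $\bfa \cdot \bfn_i$ is exactly the $i$th coordinate of the vector $PU\bfa$; hence $\bfa \in \cap_{i=1}^{m-1}\calH_i^0$, meaning $\bfa\cdot\bfn_i = 0$ for every $i$, holds if and only if $PU\bfa = 0$. This gives $\cap_{i=1}^{m-1}\calH_i^0 = \ker PU$ at once. To identify this kernel, note that $P\bfw = 0$ forces $w_1 = w_2 = \cdots = w_m$, so $\ker P$ is the line spanned by $\onevector{m}$. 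Therefore $PU\bfv = 0$ exactly when $U\bfv$ lies on this line, i.e. when $U\bfv = \lambda\,\onevector{m}$ for some $\lambda \in \boldR$, which is the claimed description.

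For parts (1) and (2) I would first record that $\bfn_i$ is the $i$th row of $U$ minus its $m$th row, so its $i$th entry is $U_{ii} - U_{mi}$ and, using the symmetry of $U$, its $m$th entry is $U_{im} - U_{mm} = U_{mi} - U_{mm}$. Under the added hypothesis every root vector $\bfy_p = \bfe_j + \bfe_k - \bfe_l$ has three distinct indices $j,k,l$, so $\|\bfy_p\|^2 = 3$ and consequently every diagonal entry of $U$ equals $3$. Both sign statements then collapse to the single inequality $U_{mi} < 3$: positivity of the $i$th entry of $\bfn_i$ is $3 - U_{mi} > 0$, and negativity of its $m$th entry is $U_{mi} - 3 < 0$.

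The inequality $U_{mi} < 3$ I would get from Cauchy--Schwarz: $U_{mi} = \bfy_m \cdot \bfy_i \le \|\bfy_m\|\,\|\bfy_i\| = 3$, with equality only when $\bfy_m$ and $\bfy_i$ are proportional. Because each such root vector has entries in $\{-1,0,1\}$ and entry-sum $2-1 = 1$, proportionality forces the proportionality factor to be $1$ and hence $\bfy_m = \bfy_i$; since the root vectors are enumerated without repetition and $i \ne m$, they are distinct, so the inequality is strict. This proves (1). Part (2) is then immediate, since $(0,\ldots,0,1)\cdot\bfn_i$ equals the $m$th entry of $\bfn_i$, which is negative, placing $\bfe_m$ in $\calH_i^-$.

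The only genuinely delicate point is the strictness in Cauchy--Schwarz and, behind it, the role of the extra hypothesis. Without it a root vector could carry a repeated index (for instance $\bfy_{jk}^j = \bfe_k$ with $\|\bfy_{jk}^j\|^2 = 1$), lowering a diagonal entry of $U$ and allowing an off-diagonal entry to tie it, so that the relevant entry of $\bfn_i$ would vanish rather than have a strict sign. I would therefore make explicit at the outset that the hypothesis is precisely what guarantees that every diagonal entry of $U$ equals $3$; once that is in hand, the distinctness of the root vectors makes both sign statements forced.
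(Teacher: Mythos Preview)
Your proof is correct and follows the same overall architecture as the paper's: the kernel identity is handled identically (read off $PU\bfa$ componentwise, then observe $\ker P = \boldR\onevector{m}$), and both proofs reduce statements (1) and (2) to the single inequality $U_{mi} < U_{mm} = 3$.

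The only divergence is in how that strict inequality is obtained. The paper argues combinatorially: under the extra hypothesis each root vector has three distinct nonzero entries in $\{-1,1\}$, so any off-diagonal $U_{ij} = \bfy_i\cdot\bfy_j$ is forced to lie in $\{-2,-1,0,1,2\}$, hence is strictly below $3$. You instead invoke Cauchy--Schwarz together with the observation that equality would force $\bfy_i = \bfy_m$, contradicting distinctness. Both arguments are short and elementary; the paper's is perhaps more concrete (it pins down the exact range of off-diagonal entries, which is occasionally useful elsewhere), while yours is slightly more conceptual and would generalize without change if the root vectors were replaced by any family of distinct vectors of equal norm. Your closing remark about why the hypothesis is needed---to rule out root vectors like $\bfy_{jk}^j = \bfe_k$ of norm $1$---is a nice addition that the paper leaves implicit.
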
 Note that the subspace  $\ker PU$ of $\boldR^m$ is always
nontrivial  because  $\rank PU = m-1.$
\begin{proof}  By the definitions of the vectors $\bfn_i$ and the
hyperplanes $\calH_i^0,$ a vector $\bfv$ lies in $\ker PU$ if  and
only if $\bfv$ is in  $\calH_i^0$ for all $i = 1, \ldots, m-1.$ For a
vector $\bfv$ in $\boldR^m,$ $PU\bfv = \bfzero$ if and only if the
$i$th entry and the $m$th entry  of the vector  $U\bfv$ are the same
real number $\lambda,$ for $i=1, \ldots, m-1.$ This is true if and
only if $U\bfv = \lambda \onevector{m}.$ Therefore, the kernel of the
matrix $PU$  is spanned by vectors $\bm{v}$ so that $U\bm{v}$ is a
scalar multiple of $[1].$ This proves the first part of the proposition.

To prove Statements \ref{neg-m-m} and \ref{0 is neg},
 simply use the definition of $PU$ and that when
$\alpha_{jk}^j \ne 0$ for all $j$ and $k,$ the diagonal entries of $U
= (u_{ij})$ are all three, while the off-diagonal  entries are all in
the set $\{-2,-1,0,1,2\}.$  Then 
\[ (0,0,\ldots,0,1) \cdot \bfn_i = (\bfn_i)_m = u_{im} - 3 <  0 \] for
$i=1, \ldots,m-1.$ 
\end{proof}

 The next theorem describes the evolution of the structure vector
$\bfa_t$  for $(\frakn,\sfQ_t),$ modulo rescaling, as the metric
nilpotent  Lie algebra $(\frakn,\sfQ)$ evolves under the Ricci flow.
The hyperplanes $\calH_{1}^0, \calH_{2}^0, \ldots, \calH_{m-1}^0$
defined in Equation \eqref{hyperplane definition} are homogeneous sets
in $\boldR^m.$  For all $i = 1, \ldots, m,$  $\calH_i^0 \setminus
\{\bfzero\}$   projects to an algebraic set $[\calH_i^0]$ of
codimension one in $P^{m-1}(\boldR)$ whose intersection with
$\boldA_{m-1}$ is a hyperplane.  For $i=1, \ldots, m-1,$  define the
functions $\eta_i : \boldR^{m-1} \to \boldR$ by 
\begin{equation}\label{eta-def} \eta_i(\bfs) = \bfn_i \cdot
(s_1,\ldots,s_{m-1},1),
\end{equation} for $\bfs$  in  $\boldR^{m-1}.$ 

\begin{theorem}\label{projectivized} Let $(\frakn,\sfQ)$ be a metric
nonabelian nilpotent  Lie algebra with  basis $\calB =
\{\bm{x}_i\}_{i = 1}^n$ that is stably Ricci-diagonal.  
Let $m = |\Lambda_\calB|,$ and let $\sfQ_t =
\sum_{i=1}^n q_i \, dx^i \otimes dx^i$ denote the solution to the
Ricci flow at time $t,$ written relative to $\calB.$ Let  the
functions $a_1,\ldots, a_m$ and  $s_1, \ldots, s_{m-1}$ be as defined
in Equations \eqref{adef} and \eqref{s-def} respectively.  Then
\begin{equation}\label{slopes} s_i^\prime (t) =  - a_m s_i
\eta_i(\bfs(t)) .
\end{equation} for $i=1, \ldots, m-1.$
\end{theorem}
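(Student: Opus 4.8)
The plan is to derive Equation \eqref{slopes} directly from the Lie bracket flow already established in Theorem \ref{ricci-system}, specifically Equation \eqref{Z}, which reads $\left(\frac{a_1'}{a_1}, \ldots, \frac{a_m'}{a_m}\right) = -\bfa^T U$. Since each $s_i = a_i/a_m$ is a ratio of the flow coordinates $a_i$ defined in \eqref{adef} and \eqref{s-def}, the natural move is to compute its logarithmic derivative; everything after that is a matter of recognizing that the resulting expression is exactly $-a_m s_i \eta_i(\bfs)$. The hypothesis that $\calB$ is stably Ricci-diagonal is used only insofar as it guarantees, via Theorem \ref{ricci-system}, that the $q_i$ stay positive and diagonal so that the $a_i$ and their flow equations are available; I would not need to revisit it.

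First I would write, for each $i = 1, \ldots, m-1$,
\[
s_i' = \left(\frac{a_i}{a_m}\right)' = s_i\left(\frac{a_i'}{a_i} - \frac{a_m'}{a_m}\right),
\]
so the problem reduces to evaluating a difference of logarithmic derivatives. From Equation \eqref{Z}, the $j$th such derivative is $\frac{a_j'}{a_j} = -(\bfa^T U)_j$. The crucial point is that $U = YY^T$ is symmetric, so that $(\bfa^T U)_j = (U\bfa)_j = \sum_k u_{jk}a_k$. Subtracting the $m$th equation from the $i$th then gives
\[
\frac{a_i'}{a_i} - \frac{a_m'}{a_m} = -\sum_k (u_{ik} - u_{mk})\,a_k = -\,\bfn_i \cdot \bfa,
\]
since $\bfn_i = \bfe_i^T PU$ is precisely the $i$th row of $U$ minus the $m$th row of $U$, as recorded just after the definition of $P$.

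The final step converts $\bfn_i \cdot \bfa$ into $a_m\,\eta_i(\bfs)$. Because $a_j = a_m s_j$ for $j < m$ while $a_m = a_m \cdot 1$, the structure vector factors as $\bfa = a_m\,(s_1, \ldots, s_{m-1}, 1)$; hence $\bfn_i \cdot \bfa = a_m\,\bfn_i \cdot (s_1, \ldots, s_{m-1}, 1) = a_m\,\eta_i(\bfs)$ by the definition \eqref{eta-def}. Combining the three displays yields $s_i' = -\,s_i\,a_m\,\eta_i(\bfs)$, as claimed, and the positivity of $a_m$ for all $t \ge 0$ ensures that the $s_i$ are well defined throughout the flow. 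I do not anticipate a genuine obstacle here: the computation is elementary, and the only place demanding care is the invocation of the symmetry of the Gram matrix $U$, which is what lets the difference of the $i$th and $m$th \emph{rows} of $U$ (defining $\bfn_i$) coincide with the difference of logarithmic derivatives produced by Equation \eqref{Z}.
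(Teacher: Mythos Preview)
Your proposal is correct and follows essentially the same route as the paper's proof: both differentiate $s_i = a_i/a_m$, invoke Equation \eqref{Z} to express $a_i'/a_i$ as minus the $i$th entry of $U\bfa$, subtract the $m$th from the $i$th to produce $-\bfn_i\cdot\bfa$, and then factor $\bfa = a_m(s_1,\ldots,s_{m-1},1)$ to recognize $\eta_i(\bfs)$. The only cosmetic difference is that the paper writes out the quotient rule $\frac{a_i'a_m - a_m'a_i}{a_m^2}$ rather than the logarithmic derivative $s_i(a_i'/a_i - a_m'/a_m)$, and it leaves the symmetry of $U$ implicit when passing from $(\bfa^T U)_i$ to ``row $i$ of $U$ dotted with $\bfa$''; your explicit note about symmetry is a welcome clarification but not a new idea.
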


\begin{proof}[Proof of Theorem \ref{projectivized}] 
Using the quotient rule, we get 
\[ s_i^\prime =  \left( \frac{a_i}{a_m} \right)^\prime =
\frac{a_i^\prime a_m - a_m^\prime a_i}{a_m^2}. \] By  Theorem
\ref{ricci-system}, for $i =1,\ldots, m-1,$ the function $a_i^\prime$ is
equal to $-a_i$ times the product of the  $i$th row of $U$ with $\bfa,$
so 
\begin{align*}  s_i^\prime  &=   \frac{a_i}{a_m} \left[ -(\text{row
$i$ of $U$}) \cdot \bfa +  (\text{row $m$ of $U$}) \cdot \bfa \right]
\\ &= -s_i  \, \left[ (\text{row $i$ of $PU$}) \cdot \bfa \right]  \\
&= -s_i a_m  \, \left[ (\text{row $i$ of $PU$}) \cdot (s_1, \ldots,
s_{m-1},1)  \right] 
\end{align*} Thus,  $s_i^\prime (t) =  - a_m s_i \eta_i(\bfs(t)),$ as
desired.
\end{proof} 

The hyperplanes $[\calH_i^0], i =1, \ldots, m$ divide the affine space
$\boldA_{m-1}$ into chambers.  The theorem implies that the
general direction of a $\bfs_t$ trajectory depends only upon which
chamber a point $\bfs$ in $\boldA_{m-1}$ is on, and that equilibrium
points come from soliton metrics (solutions to $Uv = [1]$).  

\begin{remark}\label{time change} Orbits of the system in Equation
\eqref{slopes} agree with those of the system 
\begin{equation}\label{simplersystem}  (\ln s_i)^\prime  =  -
\eta_i(\bfs).
\end{equation} The direction of the trajectories for the two systems
is the same  because $a_m > 0.$   Although the system in Equation
\eqref{slopes} may be quite difficult to solve exactly, as the
functions $\eta_i, i = 1, \ldots, m$ are linear in the variables $s_i$, the
system in Equation \eqref{simplersystem} is more tractable. 
\end{remark}

 In the next example,  we illustrate the definitions of the vectors
$\bfn_i$ and hyperplanes $\calH_i^0,$ and we give an application of
the previous theorem.  
 
\begin{example}\label{prototype} Let $(\frakn,\sfQ)$ be the
five-dimensional metric Lie algebra that with respect to an orthogonal
basis $\calB = \{ \bfx_i\}_{i=1}^5$ has the following bracket
relations:
\[ [\bfx_{1},\bfx_{3}]=\bfx_{4},  \qquad [\bfx_{1},\bfx_{4}]=\bfx_{5},
\qquad [\bfx_{2},\bfx_{3}]=\bfx_{5}   .\]  By Theorem
\ref{riccitensor}, $\calB$ is a Ricci diagonal basis.  By Remark
\ref{good}, the Ricci form is diagonal for all rescalings of vectors
in $\calB,$ so the basis is stably Ricci-diagonal.  Then 
\[ \Lambda_\calB = \{(1,3,4), (1,4,5), (2,3,5)\},\] and the Gram
matrix $U$ and the matrix $PU$ are given by  
\[ U =\begin{bmatrix} 3 & 0 & 1 \\ 0 & 3 & 1 \\ 1 & 1 &
3 \end{bmatrix}  \qquad \text{and} \qquad  PU =\begin{bmatrix} 2 & -1
& -2 \\ -1 & 2 & -2  \end{bmatrix}. \] The kernel of $PU$ is spanned
by $\bfv = (2,2,1)^T;$ note that  $U\bfv = 7 (1,1,1)^T.$ Therefore, by
Theorem \ref{Ua},  if the inner product $\sfQ$ which has $\calB$ as an
orthogonal basis has  structure vector $\bfa = (\frac{q_4}{q_1q_3},
\frac{q_5}{q_1q_4},\frac{q_5}{q_2q_3})$ equal to $(2,2,1)^T,$ then it
is  soliton with soliton constant $\beta = -7/2.$  In that case the
Ricci vector is   $-\smallfrac{1}{2}(4,1,3,0,-3),$ and the  derivation
$D = \Ric - \beta \Id$ is represented by  $[D]_\calB,$ the diagonal
matrix $\diag(3,6,4,7,10).$  This example will be revisited later in
Example \ref{prototype-2}, and soliton inner products  will be
described.

Here, $\bfn_1 = (2,-1,-2)$ and $\bfn_2 = (-1,2,-2).$ The hyperplanes
$\calH_1^0$ and  $\calH_1^0$ are the planes  $\bfn_1^\perp = (2a_1-
a_2 -  2a_3 = 0) $ and $\bfn_2^\perp = (-a_1 +2a_2 - 2a_3 = 0)$ in
$\boldR^3.$ The sets $\calH_1^0 \cap (a_3 \ne 0)$ and $\calH^0_2 \cap
(a_3 \ne 0)$ in $\boldR^3$ project  to  lines $l_1 = [\calH_1^0]$ and
$l_2 = [\calH_2^0]$ in the subset $\boldA_2$ of $P^2(\boldR).$   In
$s_1$-$s_2$ coordinates on $\boldA_2,$ the lines $l_1$ and $l_2 $ are
given by  $2s_1 -  s_2 = 2 $ and $s_1 - 2 s_2 =  -2 $ respectively,
and they intersect in the point $(s_1,s_2) = (2,2).$  Therefore the
lines $l_1$ and $l_2$ intersect in the point  $(2:2:1)$ in  $\boldA_2
\subset P^2(\boldR),$ whose preimage under the map $\bfv \mapsto
[\bfv]$ is the line of intersection $\boldR (2,2,1) $ of the planes
$\calH_1^0$ and $\calH_2^0$ in $\boldR^3.$  This line contains the
orbit of the structure vector $\bfa_t$ for any soliton metric under the
Ricci flow. See Figure 1.

The linear functions
\begin{align*} \eta_1(s_1,s_2) &= 2s_1 - s_2 - 2 \quad \text{and}\\
\eta_2(s_1,s_2) &= -s_1 + 2s_2 - 2 
\end{align*} determine coordinates on $\boldA_2$ such that
$\eta_1\left( (0,0) \right) < 0$ and $\eta_2\left( (0,0) \right)  <
0$, and at the point of intersection  $(s_1,s_2) = (2,2)$ of the lines
$l_1$ and $l_2,$ $(\eta_1,\eta_2) = (0,0).$ 

The main features of the slope field for the flow $\bfs_t$ are that
\begin{itemize}
\item{Points above $l_2$ move downward,  points below $l_2$ move upward, and
points on $l_2$ other than $(2,2)$ move horizontally; and}
\item{Points to the left of 
$l_1$ move left,  points to the right of $l_1$ move to the right, and
points on $l_1$ other than $(2,2)$ move vertically.}
\end{itemize} 
(Compare to the arrows drawn on the axes, and Part \ref{0 is neg} of Proposition \ref{PU}.) 
 It can be seen from the
slope field that for any initial point $\bfs_0 > 0,$ eventually the trajectory
enters the cone $\eta_1(\bfs) \eta_2(\bfs) > 0,$ and once inside that
cone, asymptotically approaches $(2,2).$

We note that the fixed points on the axes correspond to soliton metric
nilpotent Lie algebras whose underlying algebra is not isomorphic to $\frakn.$

\begin{figure}\label{figure-prototype}
\caption{The flow $\bfs(t)$ for Example \ref{prototype}}
\setlength{\unitlength}{.8mm}
\begin{picture}(45,50)(0,0)  \thinlines  \put(20,10){\line(1,2){16}}
\put(28,18){$l_1$}  \put(10,20){\line(2,1){35}} \put(15,28){$l_2$}
\thicklines  \put(10,0){\line(0,1){45}}  \put(0,10){\line(1,0){45}}
\put(10,47){\vector(0,1){0.6}}  \put(45,10){\vector(1,0){0.6}}
\put(4,42){$s_2$} \put(47,10){$s_1$}  \put(10,10){\circle*{2}}
\put(10,20){\circle*{2}} \put(20,10){\circle*{2}}
\put(30,30){\circle*{2}} \put(10,16){\vector(0,1){0.8}}
\put(16,10){\vector(1,0){0.8}} \put(10,30){\vector(0,-1){0.8}}
\put(30,10){\vector(-1,0){0.8}}
\end{picture}
\end{figure}

\end{example}

\subsection{Invariants for the Ricci flow}  In previous studies of the
Ricci flow for homogeneous spaces,  the systems of ODE's for the Ricci
flow are solved by finding invariant quantities under the flow.   The
next proposition gives an easy way to find  some of these invariant
quantities:  vectors in the kernel of the root matrix $Y$  for
$(\frakn,\sfQ)$ relative to orthogonal Ricci-diagonal basis  $\calB$
yield conserved monomial quantities for the  Ricci flow for $\frakn.$ 

\begin{prop}\label{conserved} Let $(\frakn,\sfQ)$ be a nonabelian
metric nilpotent Lie algebra with  orthogonal Ricci-diagonal basis
$\calB.$  Suppose that  there is a stably Ricci-diagonal basis with respect to
which the solution
 $\sfQ_t$ to the Ricci flow
is expressed as $\sfQ_t = \sum_{i=1}^n q_i \,dx^i \otimes
dx^i$.   Let  $Y$ be the root matrix for $(\frakn,\sfQ)$
relative to $\calB.$  Let $\bfa_t$ be as defined in Equation
\eqref{adef}.

 The  constant vector $\bm{d}= (d_1, \ldots, d_n)$ satisfies the
condition that  $ \bm{d}  Y^T \bfa_t = \bm{0}$ for all $t$ if and only
if  the quantity  $q_1^{d_1} q_2^{d_2} \cdots q_n^{d_n}$ is preserved
under the Ricci flow.  In particular, if  $ \bm{d} Y^T = \bm{0}$ then
$q_1^{d_1} q_2^{d_2} \cdots q_n^{d_n}$ is preserved. 
\end{prop}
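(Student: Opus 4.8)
The plan is to reduce everything to a single logarithmic-derivative computation, reading off the result directly from the nilpotent Ricci-flow equation \eqref{ricci-flow} of Theorem \ref{ricci-system}. First I would introduce the monomial $M(t) = q_1^{d_1} q_2^{d_2} \cdots q_n^{d_n}$ and observe that, because we evolve along the Ricci flow of a nilpotent metric Lie algebra, the inner product $\sfQ_t$ remains positive definite at every finite time; hence each $q_i(t) > 0$, so $M(t) > 0$ and $\ln M(t) = \sum_{i=1}^n d_i \ln q_i(t)$ is a well-defined smooth function of $t$. Since $M$ is positive, $M' = M \cdot (\ln M)'$, so $M$ is preserved under the flow (that is, constant in $t$) if and only if $(\ln M)'(t) = 0$ for all $t$.

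Next I would compute $(\ln M)'$. Differentiating term by term gives $(\ln M)' = \sum_{i=1}^n d_i\, q_i'/q_i$, which is exactly the pairing of the row vector $\bm{d} = (d_1, \ldots, d_n)$ with the column vector $(q_1'/q_1, \ldots, q_n'/q_n)^T$. By Equation \eqref{ricci-flow} this column vector equals $(\bfa^T Y)^T = Y^T \bfa_t$. Therefore $(\ln M)'(t) = \bm{d}\, Y^T \bfa_t$, a scalar-valued function of $t$; here the shapes are consistent, with $\bm{d}$ of size $1 \times n$, $Y^T$ of size $n \times m$, and $\bfa_t$ of size $m \times 1$.

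Combining the two observations yields the stated equivalence at once: the monomial $q_1^{d_1} \cdots q_n^{d_n}$ is preserved under the Ricci flow if and only if $\bm{d}\, Y^T \bfa_t = \bm{0}$ for all $t$. The ``in particular'' claim is then immediate, since $\bm{d}\, Y^T = \bm{0}$ (as a $1 \times m$ row vector) forces $\bm{d}\, Y^T \bfa_t = \bm{0} \cdot \bfa_t = 0$ for every $t$, independently of the trajectory $\bfa_t$.

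I do not anticipate any genuine obstacle: the argument is essentially a one-line identity once \eqref{ricci-flow} is available. The only points that need care are bookkeeping ones—keeping the row/column conventions straight so that the transpose in $(\bfa^T Y)^T = Y^T \bfa_t$ is applied correctly—and justifying that $\ln M$ is differentiable, which follows from the positivity of the $q_i$ guaranteed by the standing hypothesis that $\sfQ_t$ lies in $\calP^+(\frakn)$ at all finite times.
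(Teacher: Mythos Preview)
Your proposal is correct and follows essentially the same approach as the paper: pass to the logarithm of the monomial, differentiate term by term to obtain $\bm{d}$ paired with $(q_i'/q_i)$, and invoke Equation~\eqref{ricci-flow} to rewrite this as $\bm{d}\,Y^T\bfa_t$. The only difference is cosmetic---you are a bit more explicit about positivity of the $q_i$ and the row/column bookkeeping---so there is nothing substantive to add.
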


\begin{proof} The quantity $q_1^{d_1} q_2^{d_2} \cdots q_n^{d_n}$ is
preserved if and only if its natural logarithm   $\sum_{i = 1}^n d_i
\ln q_i$  is preserved.  This is true if and only if for all $t>0,$
\begin{align*} 0 &= \frac{d \,}{dt}  \sum_{i = 1}^n d_i  (\ln q_i) \\
& = \sum_{i = 1}^n d_i (\ln q_i)^\prime\\ & = (d_1, \ldots, d_n)
{(\ln q_1, \ldots, \ln q_n)^\prime}^T.
 \end{align*} Rewriting $(\ln q_1, \ldots, \ln q_n)^\prime$ using
Equation \eqref{ricci-flow},  we see that  $q_1^{d_1} q_2^{d_2} \cdots
q_n^{d_n}$ is preserved if and only if
\[  ( d_1, \ldots,  d_n)   Y^T  \bfa  = 0 . \] Clearly,  if  $ \bm{d}
Y^T = \bm{0}$ then  $q_1^{d_1} q_2^{d_2} \cdots q_n^{d_n}$ is
preserved. 
\end{proof}

\begin{example} For $(\frakh_3,\sfQ)$ as in Example \ref{h3-a}, 
conserved quantities $q_1q_3$, $q_2q_3$ and $q_1/q_2$  for the Ricci
flow for $(\frakh_3,\sfQ)$ come from the vectors $(1,0,1)^T, (0,1,1)^T$
and $(1,-1,0)^T$  in the kernel of the root matrix $Y = (1,1,-1)$.
\end{example}

\subsection{The phase portrait for the
projectivized Lie bracket flow}  The
structure vector $\bfa_t$ for   a metric nilpotent Lie algebra
 $(\frakn,\sfQ)$ evolves under the Ricci
flow according to the law in Equation \eqref{Z}.  The projection of
the $\bfa_t$ flow on $\boldR^m$ to the flow $[\bfa_t]$ on
$P^{m-1}(\boldR),$ as represented by the coordinates $\bfs_t$ in
$\boldA_{m-1},$ obeys  Equation \eqref{slopes}; and by Remark
\ref{time change},  has the same trajectories  as the solutions to
Equation \eqref{simplersystem}.  We describe the properties
of a flow satisfying Equation  \eqref{simplersystem}.

\begin{lemma}\label{limit of a} Let $U$ be the Gram matrix
for   a nonabelian nilpotent metric Lie algebra $(\frakn,\sfQ)$ with
respect to  an orthogonal Ricci-diagonal basis $\calB,$ with  $|
\Lambda_\calB | = m.$ 
Let the functions $\eta_i$ 
and hyperplanes $[\calH_i^0],$ for $i=1, \ldots, m-1,$ be as defined in
Section \ref{systems}.
Then 
 the system of ordinary differential equations 
\begin{equation*}
 (\ln s_i)^\prime =  -  \eta_i(\bfs_t), \quad i = 1, \ldots, m-1, 
 \end{equation*}
has the following properties.
\begin{enumerate}
\item{The sets $\partial(\bfs > 0)$ and $(\bfs > 0)$ are invariant under the 
flow.  }\label{inv sets}
\item{ The
set of equilibrium solutions in $(\bfs \ge 0)$
 is nonempty and compact, 
and is equal to the union $\calS = \cup \calS_M$
 of all sets of the form 
\[ \calS_M = \bigcap_{i \not \in M} [\calH_i^0] 
\cap \bigcap_{i  \in M} (s_i = 0) \cap (\bfs \ge 0), \]
where $M$ varies over all subsets of $\{1, \ldots, m-1\}.$}
\item{Define the subset $\calS^-$ of $\calS$
 to be the union of the points $\bfb = (b_i)$ in
$ \calS$ such that there exists $i$ such that
$b_i=0$ and $\eta_i(\bfb) <  0.$  All points in  $\calS^-$
repel nearby  points in $(\bfs > 0).$
}
 \label{fixed pts}
 \end{enumerate}
 \end{lemma}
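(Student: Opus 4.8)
The plan is to work with the polynomial form of the system in Equation \eqref{simplersystem}, namely $s_i' = -s_i\,\eta_i(\bfs)$ for $i=1,\ldots,m-1$, whose right-hand sides are smooth on all of $(\bfs\ge 0)$ and which has the same orbits as Equation \eqref{slopes}. I first dispatch the invariance statement \ref{inv sets}. Since $s_i'=-s_i\eta_i(\bfs)$ vanishes whenever $s_i=0$, each coordinate hyperplane $\{s_i=0\}$ is invariant, so no trajectory can change the sign of any coordinate. Consequently the open orthant $(\bfs>0)$ and each of its boundary faces are invariant, and in particular $\partial(\bfs>0)$ is invariant.

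For the equilibrium statement, I would first record the equilibrium condition: a point $\bfs\in(\bfs\ge0)$ is an equilibrium iff $s_i\eta_i(\bfs)=0$ for every $i$, i.e.\ for each $i$ either $s_i=0$ or $\eta_i(\bfs)=0$ (the latter meaning $\bfs\in[\calH_i^0]$). Given an equilibrium, set $M=\{i:s_i=0\}$; then $\eta_i(\bfs)=0$ for every $i\notin M$, so $\bfs\in\calS_M$, and conversely every point of each $\calS_M$ is an equilibrium. This yields the identity $\calS=\bigcup_M\calS_M$. Nonemptiness is then immediate, since $\bfzero$ (the class $(0:\cdots:0:1)$) lies in $\calS_{\{1,\ldots,m-1\}}$, and closedness is clear because $\calS$ is a finite union of intersections of hyperplanes, coordinate planes, and the closed orthant. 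The substantive point is boundedness, which I would obtain by proving $\calS$ is finite. Writing $\bm{w}=(s_1,\ldots,s_{m-1},1)$, a point of $\calS_M$ whose coordinates off $M$ are all strictly positive satisfies, on the index set $N=\{1,\ldots,m\}\setminus M$, the relations $(U\bm{w})_i=(U\bm{w})_m$ for $i\in N\setminus\{m\}$ together with $w_j=0$ for $j\in M$; equivalently $U_N\bm{w}_N=\mu\,\onevector{|N|}$ with $\mu=(U\bm{w})_m$, where $U_N$ is the principal submatrix of $U$ on $N$. By Theorem \ref{Ua} together with the uniqueness of nilsoliton inner products up to scaling (\cite{lauret01a})---or, more directly, by invertibility of the positive definite matrix $U_N$---the positive solutions of $U_N\bm{v}=\mu\onevector{|N|}$ form a single ray, which meets the chart $\{w_m=1\}$ in at most one point. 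Hence the relatively open stratum of each $\calS_M$ contributes at most one point, its lower-dimensional boundary being absorbed into $\calS_{M'}$ for larger $M'$; summing over the finitely many $M$ shows $\calS$ is finite, hence compact.

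For the repelling statement \ref{fixed pts}, fix $\bfb\in\calS^-$ and an index $i$ with $b_i=0$ and $\eta_i(\bfb)<0$. By continuity there is a neighborhood $V$ of $\bfb$ on which $\eta_i(\bfs)\le \tfrac12\eta_i(\bfb)<0$, so along any trajectory in $V\cap(\bfs>0)$ one has $(\ln s_i)'=-\eta_i(\bfs)\ge\delta$ with $\delta=-\tfrac12\eta_i(\bfb)>0$. Thus $s_i(t)\ge s_i(0)e^{\delta t}$ grows while the trajectory remains in $V$; since $b_i=0$, the coordinate $s_i$ is driven away from its value at $\bfb$, forcing the trajectory to leave any sufficiently small ball about $\bfb$. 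In particular no orbit in $(\bfs>0)$ converges to $\bfb$, which is the asserted repelling property.

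The \emph{main obstacle} is the boundedness (finiteness) of $\calS$: invariance and the repelling property follow from elementary sign and monotonicity considerations, but ruling out unbounded families of equilibria requires the nondegeneracy of the restricted Gram matrices $U_N$---that is, positive definiteness of $U$ (equivalently, linear independence of the root vectors) or the soliton-uniqueness theorem of Lauret. I would also verify carefully that the stratification genuinely reduces each face $\calS_M$ to a sub-configuration to which Theorem \ref{Ua} and these inputs apply.
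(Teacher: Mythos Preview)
Your treatment of Parts \ref{inv sets} and \ref{fixed pts} is essentially the same as the paper's, and is fine.

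The genuine gap is in Part 2, where you prove compactness by arguing that $\calS$ is \emph{finite}. This is false in general: the paper explicitly remarks, immediately after the lemma, that continuous families of fixed points in $(\bfs>0)$ can occur. The error enters when you assert that the principal submatrix $U_N$ is positive definite (hence invertible). The Gram matrix $U=YY^T$ is only positive \emph{semi}definite; it is positive definite precisely when the root vectors $\bfy_{jk}^l$ are linearly independent, and there is no reason this should hold---indeed it fails whenever $m=|\Lambda_\calB|>n$. When $U_N$ is singular, the system $U_N\bfv=\mu\,\onevector{|N|}$ can have an affine family of solutions meeting the chart $\{w_m=1\}$ in a positive-dimensional set, so the ``single ray'' conclusion breaks down. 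The alternative route via Lauret's uniqueness theorem does not rescue the argument either: uniqueness of the nilsoliton is up to isometry and scaling, which at the level of structure vectors need not pin down a single point in the $\bfs$-chart when distinct structure vectors give isometric metric Lie algebras.

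The paper's approach avoids this trap entirely. Rather than proving finiteness, it shows each stratum $\calS_M$ has compact closure by identifying the equilibrium conditions on the open face $A_M$ with the equation $U'\bft=-2\beta'\onevector{k}$ for the minor $U'$ of $U$, and then invoking a compactness result (Theorem~2 of \cite{payne05}) for the nonnegative solution set of such Gram-matrix equations. That external input is what replaces your invertibility hypothesis; you should either cite it or reproduce its content, and drop the finiteness claim.
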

We remark that it is possible to have continuous families of fixed points
in  $(\bfs > 0)$
 for the system in Equation \eqref{simplersystem}
 (see Examples 
27 and 28 and Theorem 29 from \cite{payne05}).
\begin{proof}
The vector function $\bfs_t$ evolves according to law 
$\bfs^\prime = \bfF(\bfs),$ where the  vector field 
 $\bfF = (F_i)$ is defined by $F_i(\bfs) = - s_i \eta_i(\bfs)$
for $i = 1, \ldots, m.$
Clearly $\bfF$ is tangent to the boundary of $(\bfs > 0),$
so that the boundary is an invariant set. Since solutions exist
everywhere and are 
unique, the interior $(\bfs > 0)$
 is forced to be invariant under the flow also.   
This proves Part \ref{inv sets}.  

 It is immediate from the definition of $\bfF$ that  
 the solution $\bfs_t$ with initial condition 
 $\bfs(0) \ge 0$ 
 is an equilibrium solution if and only if
for $i = 1, \ldots, m-1,$ either $\eta_i(\bfs_0) = 0$ or 
$s_i = 0,$ so the set of equilibrium points equals 
the set $\cup \calS_M.$  

We need to show compactness of the set $\calS.$  Clearly it
is closed.   We will show that $\calS_M$ has compact
closure for all subsets $M$ of $\{1, \ldots, m-1\}.$
Consider the restriction of the flow to the set 
\[ A_M = \{ \bfs = (s_i) 
\, : \text{$s_i=0$ for $i \in M,$ $s_i > 0$ for $i \not \in M$} \}. \]
The fixed points in this satisfy $s_i = 0$ for $i \in M,$
and for $\eta_i(\bfs_t) = 0$ for $i \not \in M.$
  After renaming $s_i, i \not \in M,$ as $t_1, \ldots, t_k$ for some $k,$
these can be written as $PU^\prime \bft = \bfzero$ for the $k \times k$
Gram matrix that is a minor of $U$ obtained by crossing out rows
and columns with indices in $M.$  By Proposition \ref{PU}, solutions 
satisfy $U \bft = -2\beta^\prime [1]$ for some $\beta < 0.$  By Theorem 2 of
\cite{payne05}, the set of such $\bft$ in $A_M$ is compact.

Suppose that $\bfb = (b_1, \ldots, b_{m-1})$ is in 
the boundary of $(\bfs > 0)$ and is
in $\calS^-.$  Then
 there exists an $i$ so that 
 $b_i = 0$ and $\eta_i(\bfb) < 0.$  Because $s_i^\prime = -s_i \eta_i(\bfs),$
there exists $\epsilon > 0$ so that 
 $s_i^\prime > 0$ for all points in the open subset $V = \{ \bfs > 0 :  
\| \bfs - \bfb \| <   \epsilon \}$ of $(\bfs > 0).$
If $\bfs$ is in  $V,$ then $0< s_i < \epsilon$ and 
$s_i^\prime $ is positive and bounded below, so $\bfs$ leaves $V$
 in finite time.  Therefore,  $\bfb$
repels all nearby points in $(\bfs > 0).$
\end{proof}

The next example illustrates  the phase portrait for the
projectivized Lie bracket flow  when there is no positive
solution $\bfv$ to $U\bfv = [1].$   To our knowledge, the smallest Gram
matrix $U^\prime$ arising from a metric nilpotent Lie algebra is $8 \times 8$ 
(see Example \ref{R6}).  For the sake of 
simplicity, in our example we have chosen 
a $3 \times 3$ matrix $U$ that
does not necessarily come from a nilpotent Lie algebra, but for which
the  $\bfa_t$  flow defined by $U$ and Equation \eqref{Z} 
has the same   qualitative features as
for the $\bfa_t$ flow defined by  $U^\prime.$
\begin{example}\label{l4-b} Suppose that
\[ U = \begin{bmatrix} 3 & 2 & 0 \\ 2 & 3  & 2 \\  0 & 2 &
3 \end{bmatrix}. \qquad \text{Then} \qquad  PU =\begin{bmatrix} 3 & 0 &
-3 \\ 2 & 1 & -1  \end{bmatrix}, \] 
the vectors $\bfn_1$ and $\bfn_2$
are  $(3,0,-3)$ and $(2,1,-1),$ respectively, and $\ker PU$ is spanned
by $\bfv = (1,-1,1).$  This  vector $\bfv$ is  the solution to
$U\bfv = \onevector{3}.$ 
  The
hyperplanes $\calH_1^0$ and $\calH_2^0$ are given by $(a_1 = a_3)$ and
$(2a_1 + a_2 - a_3 = 0)$ respectively, and  the functions $\eta_1$ and
$\eta_2$ are given by
\begin{align*} \eta_1\left((s_1,s_2)\right) &= 3s_1 - 3 \\ \eta_2
\left((s_1,s_2)\right) &= 2s_1 + s_2 - s_3. 
\end{align*} The hyperplanes intersect in the line $\boldR\bfv.$ 

\begin{figure}
\caption{The flow $\bfs(t)$ for Example \ref{l4-b}}
\setlength{\unitlength}{.8mm}
\begin{picture}(70,75)(0,0) \thinlines \put(30,0){\line(0,1){70}}
\put(33,60){$l_1$} \put(7,56){\line(1,-2){28}} \put(3,59){$l_2$}
\thicklines \put(10,0){\line(0,1){70}} \put(0,30){\line(1,0){70}}
\put(10,70){\vector(0,1){0.6}} \put(70,30){\vector(1,0){0.6}}
\put(12,65){$s_2$} \put(63,33){$s_1$} \put(10,50){\circle*{2}}
\put(10,45){\vector(0,1){0.8}} \put(10,55){\vector(0,-1){0.8}}
\put(30,30){\circle*{2}} \put(32,32){$\bfb^+$}
\put(25,30){\vector(1,0){0.8}} \put(35,30){\vector(-1,0){0.8}}
\put(10,30){\circle*{2}} \put(30,10){\circle*{2}} \put(32,7){$\bfb$}
\end{picture}
\end{figure}
  The images $l_1 = [\calH_1^0]$
and  $l_2 = [\calH_2^0]$  in $\boldA_2$ are the lines $(s_1 =
1)$ and  $(2s_1 + s_2 = 1)$ respectively.  These lines intersect in
the point $\bfb = [\bfv]$ given by $(s_1,s_2) = (1,-1).$

 Let us consider the evolution of 
$[\bfa_t]$ under time, as measured by $(s_1,s_2)$ coordinates
in $\boldA_2.$   It is easy to see
that $\eta_1\left((0,0)\right) < 0$ and  $\eta_2\left((0,0)\right) <
0$ as stated in Lemma \ref{PU}. 
As in Example \ref{prototype},  properties of the slope field
imply that any point in $(\bfs > 0)$ must  approach the
fixed point $\bfb^+,$ corresponding to the point $(1:0:1)$ in
$P^2(\boldR).$ 
\end{example}

\section{Soliton trajectories for the Ricci flow}\label{soltraj}

\subsection{Some solutions of nilsoliton trajectories}

Before stating the theorem, we note that all known soliton metrics
on nilpotent Lie algebra admit stably Ricci-diagonal bases. 
 It follows  from Theorem \ref{riccitensor} that
 any time an orthogonal  basis
  $\calB = \{ X_i\}$ has the property that the sets
$\{ J_{X_i}\}$ and $\{ \ad_{X_i}\}$ are orthogonal, the basis is 
stably Ricci-diagonal.  
In many other cases a stably Ricci-diagonal basis is guaranteed to exist:

\begin{prop}
Let $(\frakn, \sfQ)$ be a soliton metric nilpotent Lie algebra
with associated semisimple derivation $D = \Ric_{\sfQ} - \beta \Id.$  Suppose
that the eigenspaces for $D$ are all one-dimensional, and let
$\calB$ be a set of orthogonal eigenvectors.  The set $\calB$ is a 
stably Ricci-diagonal basis. 
\end{prop}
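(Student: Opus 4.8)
The plan is to convert the soliton/derivation hypothesis into a grading of the bracket by eigenvalues of $D$, and then to feed that grading into the formula \eqref{ricxy} (equivalently, to verify the hypotheses of Remark \ref{good}) for \emph{every} rescaled metric at once. Since $D$ is semisimple with all eigenspaces one-dimensional, the eigenvectors $\bfx_i\in\calB$ have pairwise \emph{distinct} eigenvalues $d_1,\dots,d_n$ (two independent eigenvectors sharing an eigenvalue would force a two-dimensional eigenspace), and $\frakn=\bigoplus_i\boldR\bfx_i$ is the eigenspace decomposition. The key Lie-theoretic fact is that $D[\bfx_i,\bfx_j]=(d_i+d_j)[\bfx_i,\bfx_j]$, so $[\bfx_i,\bfx_j]$ is zero or an eigenvector with eigenvalue $d_i+d_j$; since the eigenvalues are distinct and the $\bfx_l$ exhaust the eigenvectors, this forces $[\bfx_i,\bfx_j]=\alpha_{ij}^l\bfx_l$ for the unique index $l$ with $d_l=d_i+d_j$ (and $[\bfx_i,\bfx_j]=0$ if no such $l$ exists).

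Next I would fix an arbitrary rescaling $\sfQ'=\sum_i c_i\,dx^i\otimes dx^i$ with $c_i>0$. The basis $\calB$ stays $\sfQ'$-orthogonal and $\{\overline{\bfx_k}=\bfx_k/\sqrt{c_k}\}$ is $\sfQ'$-orthonormal, so I can compute the Hilbert--Schmidt pairing as $\la A,B\ra'=\sum_k\sfQ'(A\overline{\bfx_k},B\overline{\bfx_k})$. For the $\ad$ term, $\ad_{\bfx_i}(\overline{\bfx_k})$ lies on the eigenline with eigenvalue $d_i+d_k$ and $\ad_{\bfx_j}(\overline{\bfx_k})$ on the eigenline $d_j+d_k$; when $i\ne j$ these are distinct eigenlines, hence $\sfQ'$-orthogonal, so each summand vanishes and $\la\ad_{\bfx_i},\ad_{\bfx_j}\ra'=0$. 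For the $J'$ term (formed with the $\sfQ'$-adjoint), the identity $\sfQ'(J'_{\bfx_i}\overline{\bfx_k},\bfx_m)=\tfrac{1}{\sqrt{c_k}}\sfQ'(\bfx_i,[\bfx_k,\bfx_m])$ shows this is nonzero only when $d_i=d_k+d_m$, i.e. when $\bfx_m$ lies on the eigenline $d_m=d_i-d_k$; thus $J'_{\bfx_i}(\overline{\bfx_k})$ is supported on the single eigenline $d_i-d_k$ and $J'_{\bfx_j}(\overline{\bfx_k})$ on $d_j-d_k$, which differ for $i\ne j$, giving $\la J'_{\bfx_i},J'_{\bfx_j}\ra'=0$. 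Substituting both vanishings into \eqref{ricxy} yields $\ric_{\sfQ'}(\bfx_i,\bfx_j)=0$ for $i\ne j$, so $\calB$ is Ricci-diagonal for $\sfQ'$; since $\sfQ'$ was arbitrary, $\calB$ is stably Ricci-diagonal.

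The step I expect to need the most care is the bookkeeping of metric-dependence. The operator $\ad_{\bfx}$ is metric-independent, but both the Hilbert--Schmidt pairing and the map $J'$ change with the rescaling parameters $c_i$, so I must confirm that the one orthogonality actually used, namely $\sfQ'(\bfx_m,\bfx_{m'})=0$ for $m\ne m'$, persists for every positive diagonal $\sfQ'$ — and it does precisely because rescaling preserves $\sfQ$-orthogonality of $\calB$. The conceptual crux is that the bracket respects the eigenvalue grading and the eigenvalues are distinct; this decouples the off-diagonal indices $i\ne j$ \emph{uniformly} in the $c_i$, which is exactly what the word ``stably'' demands.
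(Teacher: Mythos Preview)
Your proof is correct and follows essentially the same approach as the paper: both use the derivation property $D[\bfx_i,\bfx_j]=(d_i+d_j)[\bfx_i,\bfx_j]$ together with one-dimensionality of the eigenspaces to force each bracket $[\bfx_i,\bfx_j]$ onto a single eigenline, and then invoke the orthogonality of the families $\{\ad_{\bfx_i}\}$ and $\{J_{\bfx_i}\}$ via Remark~\ref{good} and formula~\eqref{ricxy}. The paper's version is terser---it cites positivity of the eigenvalues (unneeded; distinctness suffices, as you observe) and jumps straight from ``$[\bfx_i,\bfx_j]$ is a multiple of some $\bfx_k$'' to the orthogonality conclusion---whereas you spell out the $\ad$- and $J'$-orthogonality computations explicitly for an arbitrary rescaled metric $\sfQ'$, which is a welcome clarification of exactly where the ``stably'' comes from.
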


\begin{proof} 
 From \cite{heberinv} it is known that
the eigenvalues of $D$ are  positive and rational for all $i = 1, \ldots, n.$
 Write $\calB = \{\bfx_i\}_{i=1}^n$ where
$D(\bfx_i) = \lambda_i \bfx_i,$ with $\lambda_1 < \cdots < \lambda_n.$ 
 Because $D$ is a derivation, 
for all $i < j,$ the bracket 
$[\bfx_i, \bfx_j]$ is in the one-dimensional eigenspace for 
$\lambda_i + \lambda_j,$
so is a multiple of $\bfx_k$ for some $k > j.$
Hence the sets $\{J_{\bfx_i}\}$ and $\{\ad_{\bfx_i}\}$ are orthogonal,
and by Remark \ref{good}, the Ricci form is diagonal.  
\end{proof}

\begin{theorem}\label{soliton traj} Let $(\frakn,\sfQ)$ be a metric
nilpotent Lie algebra with  stably Ricci-diagonal basis  $\calB =
\{ \bfx_i \}_{i=1}^n$ with $|\Lambda_\calB| = m > 0.$  Suppose that
$\sfQ$ is a soliton inner
product with soliton constant $\beta.$
Let $\bfRic_{\calB} = (r_1, \ldots,
r_n)$  denote the Ricci vector for $(\frakn, \sfQ)$  relative to $\calB$.

 Let 
$\sfQ_t = \sum_{i=1}^n q_i \, dx^i \otimes dx^i$ be the solution to the 
Ricci flow with initial condition $\sfQ.$
Let $\bfa_t = (a_1(t), \ldots, a_m(t))$ be
the structure vector for $(\frakn,\sfQ_t)$  as in Equation \eqref{adef}.
Denote $a_1, \ldots, a_m$ also by $a_{jk}^l,$    where
$(j,k,l)$ is in $\Lambda_\calB.$
\begin{enumerate}
\item{
For $i=1, \ldots, m,$ the function $a_i$ is the solution 
\[ a_i(t) =  a_i(0)
\left( -2\beta t +  1\right)^{-1}\] to  the
differential  equation
$\D{
a_i^\prime(t)  = \frac{2\beta}{a_i(0)} a_i^2(t) }.$  The ray 
$\boldR\bfa_0$ is invariant under the flow,
with 
\[ \bfa_t = \frac{a_1(t)}{a_1(0)} \, \bfa_0.\]  
}\label{ai(t)-sol} 
\item{The Ricci form $\ric_{\sfQ_t}$ for $(\frakn,\sfQ_t)$ is diagonal
relative to 
$\calB$ 
with diagonal entries given by the Ricci
vector for $\sfQ_t:$
\[ \bfRic_{\calB} =  a_1(t) \left( 
-\frac{1}{2 a_1(0)}  \, \bfa_0^T Y \right)   \]
  }\label{Ric(t)-sol}
\item{The solution $\sfQ_t = \sum_{i=1}^n q_i \, dx^i \otimes dx_i$
 to the  Ricci flow for $(\frakn,\sfQ)$ is given by
\[
q_j(t) = 
q_j(0)  \left( -2\beta t + 1   \right)^{r_j/\beta} 
, 
\]
 for $j = 1, \ldots, n.$ }\label{q-soliton-solutions}
\item{Let  $E_{\text{min}}$ denote the  eigenspace for the minimal
eigenvalue of $\ric_{\sfQ}.$ The inner product $\sfQ$ collapses under
the Ricci flow to the equivalence class 
$\overline{\sfQ_\infty},$ where $\sfQ_\infty$ is a
 semidefinite symmetric bilinear form   that is positive definite on 
$E_{\text{min}}$ and is degenerate on any subspace properly
containing $E_{\text{min}}.$ 
}\label{soliton collapse}
\end{enumerate}
\end{theorem}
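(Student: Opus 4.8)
The entire theorem rests on one observation: since $\sfQ$ is a soliton inner product, Theorem \ref{Ua} gives that the initial structure vector satisfies $U\bfa_0=-2\beta\onevector{m}$, i.e. every entry of $U\bfa_0$ equals $-2\beta$. I would use this to show that the ray $\boldR^+\bfa_0$ is invariant under the Lie bracket flow \eqref{Z}. Writing \eqref{Z} componentwise as $a_i'=-a_i(U\bfa)_i$, I would test the ansatz $\bfa_t=f(t)\,\bfa_0$ with $f(0)=1$: substituting gives $a_i'=2\beta f(t)\,a_i$ simultaneously for every $i$, while $a_i=f(t)a_i(0)$ forces $a_i'=f'(t)a_i(0)$; equating the two collapses the whole system to the single scalar equation $f'=2\beta f^2$, independent of $i$. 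Thus $f(t)\bfa_0$ is an honest solution of \eqref{Z}, and by uniqueness of solutions to the ODE it is the solution with initial value $\bfa_0$; in particular the ray is invariant.

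Granted ray invariance, Parts (1)--(3) are computation. Solving $f'=2\beta f^2$, $f(0)=1$, gives $f(t)=(-2\beta t+1)^{-1}$, hence $a_i(t)=a_i(0)(-2\beta t+1)^{-1}$; since $f=a_i/a_i(0)$ this is the same as $a_i'=(2\beta/a_i(0))a_i^2$, and writing $f=a_1(t)/a_1(0)$ yields $\bfa_t=(a_1(t)/a_1(0))\bfa_0$, proving (1). For (2) I substitute $\bfa_t=f(t)\bfa_0$ into the Ricci-vector formula $\bfRic_\calB=-\smallfrac{1}{2}\bfa_t^TY$ of Theorem \ref{riccitensor}, which applies at every time because $\calB$ is stably Ricci-diagonal, to get $\bfRic_\calB=a_1(t)\bigl(-\frac{1}{2a_1(0)}\bfa_0^TY\bigr)$. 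For (3) I combine this with the Ricci flow equation \eqref{ricci-flow}, $q_j'/q_j=-2(\bfRic_\calB)_j$: by (2) the $j$th entry of the time-$t$ Ricci vector is $r_j(-2\beta t+1)^{-1}$, so $q_j'/q_j=-2r_j(-2\beta t+1)^{-1}$, and integrating via $\int_0^t(-2\beta s+1)^{-1}\,ds=-\frac{1}{2\beta}\ln(-2\beta t+1)$ gives $q_j(t)=q_j(0)(-2\beta t+1)^{r_j/\beta}$.

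The real work is Part (4). Put $\tau=-2\beta t+1$; as $\frakn$ is nonabelian nilpotent, $\beta<0$, so $\tau\to\infty$, and $q_j(t)=q_j(0)\tau^{r_j/\beta}$. Because division by the negative number $\beta$ reverses order, the fastest-growing diagonal entries are precisely those with $r_j$ minimal, so I would normalize $\sfQ_t$ by the leading factor $\tau^{r_{\text{min}}/\beta}$, where $r_{\text{min}}=\min_j r_j$. The $j$th entry of the rescaled form is then $q_j(0)\tau^{(r_j-r_{\text{min}})/\beta}$, which stays equal to $q_j(0)$ when $r_j=r_{\text{min}}$ and tends to $0$ when $r_j>r_{\text{min}}$; hence the limiting form is $\sfQ_\infty=\sum_{j:\,r_j=r_{\text{min}}}q_j(0)\,dx^j\otimes dx^j$, positive definite on $E_{\text{min}}$ and vanishing on its orthogonal complement, so that any subspace properly containing $E_{\text{min}}$ meets that complement and is degenerate for $\sfQ_\infty$. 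The only step that is more than bookkeeping is showing that $\overline{\sfQ_\infty}$ actually lies in the boundary, i.e. that $\sfQ$ genuinely collapses: this needs some $r_j$ strictly above $r_{\text{min}}$, equivalently that the $r_j$ are not all equal. That holds because equal eigenvalues would make $\Ric_\sfQ$ a scalar multiple of the identity and hence $\sfQ$ Einstein, which is impossible on a nonabelian nilpotent Lie algebra by Jensen's theorem quoted in the introduction.
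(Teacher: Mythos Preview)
Your proof is correct and follows essentially the same approach as the paper: use the soliton condition $U\bfa_0=-2\beta\onevector{m}$ to show the ray $\boldR^+\bfa_0$ is invariant under \eqref{Z}, reduce to a scalar ODE, and then back-substitute into the Ricci-vector and Ricci-flow formulas. Your treatment of Part~(3) is in fact a bit more streamlined than the paper's (which expands the Ricci vector as sums over $\Lambda_\calB$ before integrating), and in Part~(4) you invoke Jensen's theorem to rule out all $r_j$ equal, whereas the paper appeals to the $\boldN$-grading of $\frakn$ by eigenspaces of the soliton derivation $D=\Ric_\sfQ-\beta\,\Id$; both arguments are short and valid.
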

Before we prove the theorem, we illustrate it with an example.
\begin{example}\label{prototype-2}
 Let $(\frakn,\sfQ)$ be the five-dimensional
metric Lie algebra that with respect to an orthogonal basis $\calB = \{
\bfx_i\}_{i=1}^5$ has the following bracket relations:
\[ [\bfx_{1},\bfx_{3}]=\bfx_{4},  \qquad [\bfx_{1},\bfx_{4}]=\bfx_{5},
\qquad [\bfx_{2},\bfx_{3}]=\bfx_{5}   .\] 
By Theorem \ref{riccitensor}, $\calB$ is a Ricci diagonal basis. 
The set  $\Lambda_\calB$ is equal to 
\[\{(1,3,4), (1,4,5), (2,3,5)\},\] 
and the Gram matrix $U$ and the matrix
$PU$ are given by  
\[ U =\begin{bmatrix} 3 & 0 & 1 \\ 0 & 3 & 1 \\ 1 & 1 &
3 \end{bmatrix}  \qquad \text{and} \qquad  PU =\begin{bmatrix} 2 & -1
& -2 \\ -1 & 2 & -2  \end{bmatrix}. \] The kernel of $PU$ is spanned by
$\bfv = (2,2,1)^T;$ note that  $U\bfv = 7 (1,1,1)^T.$
 Therefore, by
Theorem \ref{Ua},  if the inner product $\sfQ$ which has $\calB$
as an orthogonal basis has 
structure vector $\bfa = (\frac{q_4}{q_1q_3},
\frac{q_5}{q_1q_4},\frac{q_5}{q_2q_3})$
equal to $(2,2,1)^T,$ then it  is  soliton with
soliton constant $\beta = -7/2.$  In that case the Ricci
vector is   $-\smallfrac{1}{2}(4,1,3,0,-3),$ and
the  derivation $D = \Ric - \beta \Id$ is represented by  $[D]_\calB,$
 the diagonal matrix $\diag(3,6,4,7,10).$ 

In this example, $a_1 = a_{13}^4, a_2 = a_{14}^5$ and $a_3 = a_{23}^5.$
  We let 
 $\sfQ = \sum_{i=1}^5 q_i dx^i \otimes dx^i$
with $q_1 = 1, q_2 = 4, q_3 = 1, q_4 = 2, q_5 = 4.$
The soliton 
metric nilpotent Lie algebra $(\frakn,\sfQ)$ has structure vector
 $\bfa_0  = (2,2,1)^T,$ 
 Ricci vector 
$-\frac{1}{2}(4,1,3,0,-3)$ and  soliton constant $\beta = -7/2.$ 

By Theorem \ref{soliton traj}, the flow for $\bfa_t$ is given by 
\[ a_1^\prime =  -\smallfrac{7}{2}a_1^2, \qquad  a_2^\prime =
-\smallfrac{7}{2}a_2^2, \qquad a_3^\prime =  -7 a_3^2, \]
 solutions of which are  
\[ a_1(t) =  2 \left(7 t + 1 \right)^{-1} \hskip
-8pt, a_2(t) = 2
\left(7 t +  1 \right)^{-1}
\hskip -8pt , a_3(t) = \left(7 t + 1 \right)^{-1} \hskip
-8pt .  \] A solution $\bfa_t$   takes values on the ray
$\boldR^+ ( 2, 2, 1);$ to be precise,  
\[ \bfa_t = \left(7 t + 1 \right)^{-1}  (2,2,1).\]

Solutions for the functions $q_1, \ldots, q_5$ are 
\begin{align*} q_1(t) &=   \left(  7t + 1
\right)^{4/7} \\
q_2(t) &=   4 \left(  7  t + 1\right)^{1/7}\\
 q_3(t)&=  
\left(  7 t + 1 \right)^{3/7} \\ 
 q_4(t)&=  2 \\
q_5(t)&=  4 \left(  7 t + 1 \right)^{-3/7}.
\end{align*}
Then
\[ q_1(t) \asymp t^{4/7}, q_2(t) \asymp t^{1/7},  q_3(t) \asymp t^{3/7},
q_4(t) \asymp t^{0}, \, \text{and} \, q_5(t) \asymp t^{-3/7},\] 
 We the notation 
$f(t) \asymp g(t)$  indicates that for functions $f(t)$ and
$g(t),$ the limit $\lim_{t \to \infty} \frac{f(t)}{g(t)}$ is a 
finite nonzero number.

The inner product collapses to a degenerate
volume-normalized  symmetric bilinear
form $\overline{\sfQ_\infty}$ supported on 
$\myspan\{\bfx_1\},$ which is the eigenspace for the  
 minimal eigenvalue of the semisimple derivation $D = \Ric - \beta \Id.$
\end{example}

Now we prove the theorem.

\begin{proof}[Proof of Theorem \ref{soliton traj}.] 
Since $(\frakn, \sfQ)$ is soliton, by Theorem \ref{Ua},   $U \bfa_0 =
-2\beta\onevector{m}$ for some $\beta < 0.$   
By Theorem \ref{ricci-system},
\[ a_i^\prime = - a_i (U\bfa_t)_i, \]
for $i=1, \ldots, m.$
 At the point $\bfa = \lambda
\bfa_0,$ for any $\lambda > 0$ and any $i = 1, \ldots, m,$ 
\begin{align}\label{aiprime}
 a_i^\prime(t) &= - a_i(t) \, (U\bfa)_i  \notag  \\
 &= - \lambda a_i(t) \,  (U\bfa_0)_i    \notag  \\
 &= - \lambda a_i(t)
\, (-2\beta  \onevector{m})_i  \notag  \\
&= 2 \beta \lambda  a_i(t).
\end{align} 
Therefore $\bfa^\prime(t) = 2\beta \lambda \bfa_t$ at all points 
$\lambda \bfa_0.$  Thus,
the ray $\boldR^+ \bfa_0$ is invariant under the flow.  Since
for $i = 1, \ldots, m,$ the function $a_i$ is given by 
 $a_i(t) = 
\smallfrac{a_i(0)}{a_1(0)} a_1(t)$ for all $t \ge 0,$ 
$\bfa_t = 
\smallfrac{a_1(t)}{a_1(0)} \bfa_0$ for all $t \ge 0.$

At the point  $\bfa_t = \lambda
\bfa_0,$  the value of  $\lambda$ is
$\smallfrac{a_1(t)}{a_1(0)},$ so Equation \eqref{aiprime} becomes
\[ a_i^\prime(t) = \frac{2  \beta }{a_1(0)} a_1(t) a_i(t) 
=   \frac{2\beta}{a_i(0)} a_i^2(t).\]
  Solutions  are
\[ a_i(t) = a_i(0)\left( -2\beta t +  1 \right)^{-1},\] for $i =
1, \ldots, m.$  This
proves Part \ref{ai(t)-sol}.  

Part \ref{Ric(t)-sol} is an immediate consequence of Equation
\eqref{Ricci vector} in Theorem \ref{riccitensor}.

Now we consider Part \ref{q-soliton-solutions} of the theorem.  In
order to compute $q_j(t),$ for $j = 1, \ldots, m,$ by Theorem 
\ref{ricci-system}, we
 need to solve the system of differential equations
$  (\ln q_j)^\prime = - 2 (\bfRic_\calB)_j,$ where
$j = 1, \ldots, n.$ 
Substituting the expressions for for $a_i(t)$ and $\bfRic_\calB$ 
from Parts \ref{ai(t)-sol} and
\ref{Ric(t)-sol}, we have 
\begin{align*} 
(\ln q_j)^\prime &= \sum_{\substack{(j,k,l) \in
\Lambda_\calB \\ (k,j,l) \in \Lambda_\calB }} a_{jk}^l - \sum_{(k,l,j)
\in \Lambda_\calB } a_{kl}^j \\  
&=  \sum_{\substack{(j,k,l) \in
\Lambda_\calB \\ (k,j,l) \in \Lambda_\calB }} ((a_{kl}^j(0)) (-2\beta + 1)^{-1} 
- \sum_{(k,l,j) \in \Lambda_\calB }  ((a_{jk}^l(0)) (-2\beta + 1)^{-1}. 
\end{align*} Integrating, we find that $\ln q_j$
is equal to 
\begin{equation*}  \sum_{\substack{(j,k,l) \in
\Lambda_\calB \\ (k,j,l) \in \Lambda_\calB }} \frac{a_{jk}^l}{-2\beta} \ln
\left(-2\beta t + 1 \right) - \sum_{(k,l,j) \in
\Lambda_\calB }  \frac{a_{kl}^j}{-2\beta} \ln(-2\beta t + 1)
\quad + C. 
\end{equation*} 
After exponentiating both sides and using that 
\[ r_j =  -\frac{1}{2}
\left( \sum_{\substack{(j,k,l) \in \Lambda_\calB \\ (k,j,l) \in
\Lambda_\calB }} a_{jk}^l -  \sum_{(k,l,j) \in \Lambda_\calB } 
a_{kl}^j \right)  \]
by
Theorem \ref{riccitensor},
 the desired expression for $q_j(t)$ is obtained. 

 Because inner product $\sfQ$ is soliton, the Lie algebra
$\frakn$ is orthogonally $\boldN$-graded by the eigenspaces of the 
derivation
associated to $\sfQ.$ 
Without loss of generality, let $r_1$ be the minimal eigenvalue of 
the Ricci form for $\sfQ,$ and let $E_{\text{min}}$ be the corresponding
eigenspace.  Then 
$ \lim_{t \to \infty} t^{-r_1 /\beta} \sfQ_t$
is positive definite on  $E_{\text{min}}$ but is degenerate on any subspace
properly containing  $E_{\text{min}}.$   This completes the proof of the 
theorem. 
\end{proof}

\section{Examples}\label{examples}

\subsection{Heisenberg metric Lie algebras}

The Heisenberg metric Lie algebras are among the most symmetric
nilpotent Lie algebras.   
\begin{example}  
 Let $\sfQ$ be an inner product on the $(2r+1)$-dimensional
Heisenberg algebra $\frakh_{2r+1}.$ From
Equation \eqref{ricxy}, it is easy to see that the center $\frakz$ is
the single positive eigenspace for the  Ricci endomorphism, and the
Ricci form is negative definite on $\frakz^\perp$.  A vector $\bfz$ spanning the center is an eigenvector for the Ricci
endomorphism.   By orthogonally block  diagonalizing the nondegenerate
skew-symmetric endomorphism $J_{\bfz}: \frakh_{2r+1} \to
\frakh_{2r+1}$ into one block of form $[0]$ and $r$  blocks of form
$( \begin{smallmatrix}  0 & c \\ -c & 0 \end{smallmatrix}),$ it is
possible to find an orthogonal Ricci eigenvector basis $\calB = \{
\bfx_i\}_{i=1}^{2r+1}$ of $\frakh_{2r+1}$ such that
  $\Lambda_\calB =
\{ (2i - 1,2i,2r+1) \, | \, i = 1, \ldots, r\}.$ The Gram matrix 
$U = (u_{ij})$
for $\frakh_{2r+1}$ relative to $\calB$ is the positive definite $r
\times r$ matrix defined by 
\[ u_{ij} = \begin{cases} 3 & i=j \\ 1 & i \ne j 
\end{cases}. 
\] The solution to  $U\bfv = \onevector{r}$ is a scalar multiple of  $\bfb =
\onevector{r}.$  Therefore, any inner product $\sfQ^\star$ with structure vector
that is a scalar multiple of 
$[1]$ is soliton.   The $(r-1) \times r$  matrix
$PU = (b_{ij})$ is of form \[ b_{ij} = \begin{cases} 2 & i=j \\ 0 & i
\ne j, 1 \le i, j \le m-1 \\ -2 & j = m 
\end{cases} 
\] and by Remark \ref{time change}, after a change of  variables to
$\bfs$, the trajectories for the projectivized Ricci flow  for
$\frakh_{2r+1}$ are encoded in the system of differential equations
\[(\ln s_i)^\prime = 2(1- s_i), \quad i=1, \ldots, r-1, \] which has
solution
\[ s_i(t) =  \frac{e^{2t}}{C_i + e^{2t}}, \quad i = 1, \ldots, r-1.\]
Clearly, $\bfs_t$ converges to $[1]$ as $t \to \infty$
 for all initial  conditions.  At
the limit point, all values of the structure constants are equal, so
that the limiting volume-normalized  metric Lie algebra
 is  the
Heisenberg Lie algebra endowed with  a volume-normalized soliton inner product.
\end{example}

\subsection{When the Lie algebra does not support a soliton inner product}

Next is an example of a seven-dimensional nilpotent metric Lie algebra
$(\frakn,\sfQ)$  such that the limit 
$\overline{(\frakn_\infty,\sfQ)}$ of  $\overline{(\frakn, \sfQ)}$ under the
projectivized Ricci flow 
$\psi_t: \calN_7 \to \calN_7$ has a limiting Lie algebra $\frakn_\infty$
that is  not isomorphic to the initial Lie algebra $\frakn.$
\begin{example}\label{R6}  Let $\frakn$ be a  Lie
algebra with basis $\calB= \{\bfx_i\}_{i=1}^{7}$ and with 
algebraic structure  determined by the bracket relations 
\begin{align*}   [\bfx_1,\bfx_i] & =  (\alpha_{1,i}^{i +1}) \bfx_{i +
1} \qquad \text{for $i = 2, \ldots , 6$ and} \\ [\bfx_2,\bfx_i] & =
(\alpha_{2,i}^{i +2}) \bfx_{i + 2} \qquad \text{for $i = 3, 4, 5$},
\end{align*}  where $\alpha_{1,i}^{i+1} \ne 0$ for  $i = 2, \ldots ,
6$ and $\alpha_{2,i}^{i+2} \ne 0$ for  $i = 2, \ldots , 5.$ No Lie
algebra of this form admits a soliton inner product (Theorem 34,
\cite{payne05}).

Take an initial inner product $\sfQ$ that is diagonal with respect
to $\calB.$  By  Theorem \ref{riccitensor},
$\calB$ is a  Ricci-diagonal basis, and by Remark 
\ref{good} $\calB$ remains Ricci-diagonal under rescalings of 
$\sfQ,$ so it is stably Ricci-diagonal.   Let $\bfa_t$ denote the
structure vector for the solution $\sfQ_t$ to the Ricci flow at time
$t.$  Recall that the entries
of $a_i$ are the squares of the nonzero structure constants $\alpha_{jk}^l.$
 Since no Lie algebra with $\bfa_t > 0$ can be soliton, the limit
\[ \bfa^\star = \lim_{t \to \infty} [\bfa_t] = (a_1 : a_2 : \cdots : a_8)\] 
in projective space
must have some entry $a_i$ equal to zero. 

The only elements $\bfx$  in $\frakn$ such that
the endomorphism $\ad_{\bfx}$
has rank three or more lie in $\myspan \{\bfx_1, \bfx_2\}.$
For this kind of element $\bfx = c_1 \bfx_1 + c_2 \bfx_2$,
where $c_1, c_2 \in \boldR,$ the adjoint map 
is represented with respect to $\calB$ by the matrix 
\[ [\ad_{c_1 \bfx_1 + c_2 \bfx_2}]_{\calB} =  
\begin{bmatrix} 
0 & 0 & 0 & 0 & 0 & 0 &  0 \\
0 & 0 & 0 & 0 & 0 & 0 &  0 \\
0 & c_1 \alpha_{1,2}^3 & 0 & 0 & 0 & 0 &  0 \\
0 &  0 & c_1\alpha_{1,3}^4 & 0 & 0 & 0 &  0 \\
0 & 0 &  c_2 \alpha_{2,3}^5  & c_1 \alpha_{1,4}^5 & 0 & 0 &  0 \\
0 & 0 & 0 &  c_2 \alpha_{2,4}^6 & c_1 \alpha_{1,5}^6 & 0 &  0 \\
0 & 0 & 0 & 0 &  c_2 \alpha_{2,5}^7  & c_1  \alpha_{1,6}^7 &  0 \\
\end{bmatrix}.
\]
In particular, there exist elements 
$\bfx_1$ and $\bfx_2$ such that the ranks of $\ad_{\bfx_1}$ and
$\ad_{\bfx_2}$ are five and three respectively.

We claim that if any structure constant becomes  zero 
as $[\bfa_t]$ approaches its limit, the
limiting Lie algebra $\frakn_\infty$ no longer has this property and
therefore is not isomorphic to the original Lie algebra $\frakn.$
The only way that $\frakn_\infty$ can have an element such that the rank
of $\ad_{c_1 \bfx_1 + c_2 \bfx_2}$ is five is if
 $\alpha_{1,i}^{i+1} \ne 0$ for $i=2, \ldots, 6.$  In order to have
an additional element $c_1 \bfx_1 + c_2 \bfx_2$ 
with the rank of  $\ad_{c_1 \bfx_1 + c_2 \bfx_2}$ 
equal to three, it is necessary  $\alpha_{2,i}^{i+2} \ne 0$ for 
$i=3,4,5.$  But then $\bfa^\star > 0,$ a contradiction.
  Therefore,  $\frakn_\infty$ can not be isomorphic to 
$\frakn.$
 
\end{example}

\bibliographystyle{amsalpha}
\bibliography{bibfile}

\providecommand{\bysame}{\leavevmode\hbox to3em{\hrulefill}\thinspace}
\providecommand{\MR}{\relax\ifhmode\unskip\space\fi MR }
\providecommand{\MRhref}[2]{%
  \href{http://www.ams.org/mathscinet-getitem?mr=#1}{#2}
}
\providecommand{\href}[2]{#2}
\begin{thebibliography}{Ham82}

\bibitem[GIK06]{guenther-isenberg-knopf}
Christine Guenther, James Isenberg, and Dan Knopf, \emph{Linear stability of
  homogeneous {R}icci solitons}, Int. Math. Res. Not. (2006), Art. ID 96253,
  30. \MR{MR2264732}

\bibitem[Ham82]{hamilton82}
Richard~S. Hamilton, \emph{Three-manifolds with positive {R}icci curvature}, J.
  Differential Geom. \textbf{17} (1982), no.~2, 255--306. \MR{MR664497
  (84a:53050)}

\bibitem[Heb98]{heberinv}
Jens Heber, \emph{Noncompact homogeneous {E}instein spaces}, Invent. Math.
  \textbf{133} (1998), no.~2, 279--352. \MR{99d:53046}

\bibitem[IJ92]{isenberg-jackson-92}
James Isenberg and Martin Jackson, \emph{Ricci flow of locally homogeneous
  geometries on closed manifolds}, J. Differential Geom. \textbf{35} (1992),
  no.~3, 723--741. \MR{MR1163457 (93c:58049)}

\bibitem[IJL06]{isenberg-jackson-lu}
James Isenberg, Martin Jackson, and Peng Lu, \emph{Ricci flow on locally closed
  homogeneous 4-manifolds}, Communications in Analysis and Geometry \textbf{14}
  (2006), no.~2, 345--386.

\bibitem[Jab]{jablonski-08}
Michael Jablonski, \emph{Detecting orbits along subvarieties via the moment
  map}, preprint: arXiv:math.DG/0810.5697.

\bibitem[Jen69]{jensen-69}
Gary~R. Jensen, \emph{Homogeneous {E}instein spaces of dimension four}, J.
  Differential Geometry \textbf{3} (1969), 309--349. \MR{MR0261487 (41 \#6100)}

\bibitem[KM01]{knopf-mcleod-01}
Dan Knopf and Kevin McLeod, \emph{Quasi-convergence of model geometries under
  the {R}icci flow}, Comm. Anal. Geom. \textbf{9} (2001), no.~4, 879--919.
  \MR{MR1868923 (2003j:53106)}

\bibitem[Lau01]{lauret01a}
Jorge Lauret, \emph{Ricci soliton homogeneous nilmanifolds}, Math. Ann.
  \textbf{319} (2001), no.~4, 715--733. \MR{2002k:53083}

\bibitem[Lot07]{lott-05}
John Lott, \emph{On the long-time behavior of type-{III} {R}icci flow
  solutions}, Math. Ann. \textbf{339} (2007), no.~3, 627--666. \MR{MR2336062
  (2008i:53093)}

\bibitem[Pay]{payne05}
Tracy~L. Payne, \emph{The existence of soliton metrics for nilpotent {L}ie
  groups}, {\tt arXiv:0809.5068v1}.

\bibitem[Pera]{perelman1}
Grisha Perelman, \emph{The entropy formula for the {R}icci flow and its
  geometric applications}, {\tt arXiv:mathDG/0211159}.

\bibitem[Perb]{perelman3}
\bysame, \emph{Finite extinction time for the solutions to the {R}icci flow on
  certain three-manifolds}, {\tt arXiv:mathDG/0307245}.

\bibitem[Perc]{perelman2}
\bysame, \emph{{R}icci flow with surgery on three-manifolds}, {\tt
  arXiv:mathDG/0303109}.

\bibitem[Wil82]{wilson82}
Edward~N. Wilson, \emph{Isometry groups on homogeneous nilmanifolds}, Geom.
  Dedicata \textbf{12} (1982), no.~3, 337--346. \MR{MR661539 (84a:53048)}

\end{thebibliography}

\end{document}